\tikzset{help lines/.style={color=blue!50,very thin}}
\def\part#1{\frac{\partial\phantom{#1}}{\partial#1}}
\newtheorem{thm}{Theorem}
\newtheorem{theorem}[thm]{Theorem}
\newtheorem{proposition}[thm]{Proposition}
\newtheorem{lemma}[thm]{Lemma}
\newtheorem{corollary}[thm]{Corollary}
\newtheorem{conjecture}[thm]{Conjecture}
\newenvironment{proof}{\begin{trivlist}\item[]{\bf Proof} }%
{\hfill $\Box$ \end{trivlist}}
{\end{trivlist}}
\newenvironment{remark}{\begin{trivlist}\item[]{\bf Remark} }%
{\end{trivlist}}
{\end{trivlist}}
\newenvironment{question}{\begin{trivlist}\item[]{\bf Question} }%
{\end{trivlist}}
\def\Z{\ifmmode{{\mathbb Z}}\else{${\mathbb Z}$}\fi}
\def\Q{\ifmmode{{\mathbb Q}}\else{${\mathbb Q}$}\fi}
\def\C{\ifmmode{{\mathbb C}}\else{${\mathbb C}$}\fi}
\def\P{\ifmmode{{\mathbb P}}\else{${\mathbb P}$}\fi}
\def\H{\ifmmode{{\mathrm H}}\else{${\mathrm H}$}\fi}
\def\B{\ifmmode{{\mathcal B}}\else{${\mathcal B}$}\fi}
\def\E{\ifmmode{{\mathcal E}}\else{${\mathcal E}$}\fi}
\def\F{\ifmmode{{\mathcal F}}\else{${\mathcal F}$}\fi}
\def\K{\ifmmode{{\mathcal K}}\else{${\mathcal K}$}\fi}
\def\L{\ifmmode{{\mathcal L}}\else{${\mathcal L}$}\fi}
\def\M{\ifmmode{{\mathcal M}}\else{${\mathcal M}$}\fi}
\def\N{\ifmmode{{\mathcal N}}\else{${\mathcal N}$}\fi}
\def\O{\ifmmode{{\mathcal O}}\else{${\mathcal O}$}\fi}
\def\U{\ifmmode{{\mathcal U}}\else{${\mathcal U}$}\fi}
\def\V{\ifmmode{{\mathcal V}}\else{${\mathcal V}$}\fi}
\def\X{\ifmmode{{\mathcal X}}\else{${\mathcal X}$}\fi}
\def\Br{\ifmmode{{\mathrm{Br}}}\else{${\mathrm{Br}}$}\fi}
\def\OG{\ifmmode{\widetilde{\cal M}_4}\else{$\widetilde{\cal M}_4$}\fi}
\def\D{\ifmmode{{\mathcal D}^b}\else{${{\mathcal
    D}^b}$}\fi}
\def\Shah{\ifmmode{\amalg\hspace*{-3.5pt}\amalg}\else{$\amalg\hspace*{-3.5pt}\amalg$}\fi}
\begin{document}

\title{Singular fibres of very general Lagrangian fibrations\footnote{2010 {\em Mathematics Subject
  Classification.\/} 14D06, 14K10, 53C26.}}
\author{Justin Sawon}
\date{May, 2019}
\maketitle

\begin{abstract}
Let $\pi:X\rightarrow\P^n$ be a (holomorphic) Lagrangian fibration that is very general in the moduli space of Lagrangian fibrations. We conjecture that the singular fibres in codimension one must be semistable degenerations of abelian varieties. We prove a partial result towards this conjecture, and describe an example that provides further evidence.
\end{abstract}

\maketitle

\section{Introduction}

In~\cite{sawon16} the author proved that there are finitely many deformation classes of (holomorphic) Lagrangian fibrations $\pi:X\rightarrow\P^n$ satisfying certain natural conditions. One of these conditions was that the singular fibres in codimension one should be semistable degenerations of abelian varieties. We will describe explicitly what this means shortly. At this stage, let us just point out that semistable degenerations are `nice' in the following sense: if $\bar{\mathcal{A}}_n$ denotes a toroidal compactification of the moduli space $\mathcal{A}_n$ of abelian varieties, then the boundary of $\bar{\mathcal{A}}_n$ parametrizes semistable degenerations.

The singular fibres of Lagrangian fibrations in codimension one were studied by Matushita~\cite{matsushita01, matsushita07} and Hwang and Oguiso~\cite{ho09, ho11}. Their classifications include many fibres that are not semistable, because they consider all Lagrangian fibrations, not just very general ones. Matsushita~\cite{matsushita05, matsushita16} proved that inside the moduli space $\mathcal{M}$ of deformations of $X$ as a complex manifold, there is a hypersurface $\mathcal{H}$ parametrizing Lagrangian fibrations; by {\em very general\/} we mean that $X$ corresponds to a very general point of $\mathcal{H}$, i.e., contained in the complement of countably many Zariski closed subsets. Any Lagrangian fibration can be deformed slightly so that it becomes very general.

Hwang and Oguiso's results imply that if a singular fibre $X_t$ {\em in codimension one\/} is semistable then it must be a rank-one semistable degeneration. Explicitly, this means that the normalization of $X_t$ must be a $\P^1$-bundle, or disjoint union of $\P^1$-bundles, over an $(n-1)$-dimensional abelian variety $A$. Write this as
$$\tilde{X}_t=\coprod_{i=1}^k Y_i.$$
Moreover, each $Y_i$ must be a $\P^1$-bundle over $A$ given by projectivizing a topologically trivial rank-two bundle,
$$Y_i\cong\P(\O_A\oplus\mathcal{L})\rightarrow A$$
where $\mathcal{L}\in\mathrm{Pic}^0A$. This implies that $Y_i$ has distinguished zero and infinity sections, $(Y_i)_0$ and $(Y_i)_{\infty}$. The fibre $X_t$ itself is obtained by gluing zero sections to infinity sections, so that 
$$(Y_i)_{\infty}\cong A\qquad\qquad\mbox{is identified with}\qquad\qquad (Y_{i+1})_0\cong A$$
(with $Y_{k+1}$ denoting $Y_1$). In general these identifications $A\cong (Y_i)_{\infty}\rightarrow (Y_{i+1})_0\cong A$ do {\em not\/} take $0$ to $0$, so $X_t$ itself is {\em not\/} a fibration over $A$. Figures $1$ and $2$ show examples with $k=1$ and $k=3$, respectively.

\begin{center}
\vspace*{3mm}
\begin{tikzpicture}[scale=1.0,>=stealth]
  \draw (-2,0) -- (-2,2.5);
  \draw (-6,0) -- (-6,2.5);
  \draw[thick, blue] (-2,0) -- (-6,0);
  \draw[thick, blue] (-2,2.5) -- (-6,2.5);
  \draw (-2,-1.5) -- (-6,-1.5);
  \draw[very thick, red] (-5,0) -- (-5,2.5);
  \draw[very thin, gray, ->] (-4.9,2.4) -- (-4.1,0.1);
  \draw[very thin, gray, ->] (-3.9,2.4) -- (-3.1,0.1);
  \draw[->] (-4,-0.5) -- (-4,-1);
  
  \draw[->] (-1.2,1.2) -- (-0.7,1.2);
  
  \draw (0,0) .. controls (0.2,0.1) and (0.7,0.5) .. (0.8,0.6);
  \draw[dashed] (0.8,0.6) .. controls (1.5,1.3) and (1.5,2.6) .. (0.8,2.7);
  \draw (0.8,2.7) .. controls (0.1,2.5) and (0.1,1.2) .. (0.8,0.6);
  \draw[dashed] (0.8,0.6) .. controls (0.7,0.7) and (1.2,0.3) .. (1.4,0.2);
  
  \draw (4,-1) .. controls (4.2,-0.9) and (4.7,-0.5) .. (4.8,-0.4);
  \draw (4.8,-0.4) .. controls (5.5,0.3) and (5.5,1.6) .. (4.8,1.7);
  \draw (4.8,1.7) .. controls (4.1,1.5) and (4.1,0.2) .. (4.8,-0.4);
  \draw (4.8,-0.4) .. controls (4.7,-0.3) and (5.2,-0.7) .. (5.4,-0.8);
  
  \draw (0,0) -- (4,-1);
  \draw (0.8,2.7) -- (4.8,1.7);
  \draw[thick, blue] (0.8,0.6) -- (4.8,-0.4);
  \draw[dashed] (1.4,0.2) -- (4.6,-0.6);
  \draw (4.6,-0.6) -- (5.4,-0.8);

  \draw[very thick, red] (0.5,-0.125) .. controls (1,0) and (1.2,0) .. (1.8,0.35);
  \draw[dashed, very thick, red] (1.8,0.35) .. controls (2.4,0.7) and (3.3,2.0) .. (2.8,2.2);
  \draw[very thick, red] (2.8,2.2) .. controls (2.0,1.9) and (3.2,0.15) .. (3.8,-0.15);
  \draw[dashed, very thick, red] (3.8,-0.15) .. controls (4.3,-0.45) and (4.3,-0.5) .. (4.7,-0.6);
  
  
  \draw (-6.3,2.5) node {$s_{\infty}$};
  \draw (-6.3,0) node {$s_0$};
  \draw (-6.3,1.2) node {$\mathbb{P}^1$};
  \draw (-6.3,-1.5) node {$A$};
\end{tikzpicture}\\
\vspace*{5mm}
Figure 1: Rank-one semistable degeneration
\end{center}

\begin{center}
\vspace*{3mm}
\begin{tikzpicture}[scale=1.1,>=stealth]
  \draw (-2,-1.5) -- (-2,4.5);
  \draw (-5.5,-1.5) -- (-5.5,4.5);
  \draw[very thick, red] (-4.7,-1.5) -- (-4.7,4.5);
  \draw[thick, blue] (-2,-1.5) -- (-5.5,-1.5);
  \draw[thick, blue] (-2,0.5) -- (-5.5,0.5);
  \draw[thick, blue] (-2,2.5) -- (-5.5,2.5);
  \draw[thick, blue] (-2,4.5) -- (-5.5,4.5);
  \draw[very thin, gray, ->] (-4.6,4.4) -- (-3.8,-1.4);
  \draw[very thin, gray, ->] (-3.6,4.4) -- (-2.8,-1.4);

  \draw[->] (-1.2,1.2) -- (-0.7,1.2);
 
  \draw (0,0.5) -- (0.1,0.76);
  \draw[dashed] (0.1,0.76) -- (0.2,1.02);
  \draw (0.2,1.02) -- (0.8,2.58);
  \draw[dashed] (0.8,2.58) -- (0.9,2.84);
  \draw (0.9,2.84) -- (1,3.1); 
  \draw (1,3.1) -- (5,2.1) -- (4,-0.5) -- (0,0.5);
 
  \draw (5.7,0) -- (4.9,0.2);
  \draw[dashed] (4.9,0.2) -- (1.7,1) -- (0.8,2.5);
  \draw (0.8,2.5) -- (0.5,3);
  \draw (0.5,3) -- (4.5,2) -- (5.7,0);
  
  \draw (-0.3,0.8) -- (0.225,0.975);
  \draw[dashed] (0.225,0.975) -- (1.8,1.5) -- (5.4,0.6);
  \draw (5.4,0.6) -- (5.8,0.5);
   
  \draw (5.8,0.5) -- (3.7,-0.2) -- (-0.3,0.8);
 
  \draw[thick, blue] (0.2,0.95) -- (4.2,-0.05);
  \draw[thick, blue] (0.8,2.5) -- (4.8,1.5);
  \draw[dashed, thick, blue] (1.45,1.4) -- (4.45,0.65);
  \draw[thick, blue] (4.45,0.65) -- (5.45,0.4);

  \draw[very thick, red] (0.7,0.3) -- (0.79,0.45);
  \draw[dashed, very thick, red] (0.79,0.45) -- (0.97,0.75);
  \draw[very thick, red] (0.97,0.75) -- (1.84,2.20);
  \draw[dashed, very thick, red] (1.84,2.20) -- (2.05,2.55);
  \draw[very thick, red] (2.05,2.55) -- (2.2,2.8);

  \draw[very thick, red] (1.48,2.75) -- (1.90,2.22);
  \draw[dashed, very thick, red] (1.90,2.22) -- (3.16,0.63);

  \draw[dashed, very thick, red] (3.05,1.2) -- (2.07,0.57);
  \draw[very thick, red] (2.07,0.57) -- (1.65,0.3);


  \draw (-5.8,-0.5) node {$\mathbb{P}^1$};
  \draw (-5.8,1.5) node {$\mathbb{P}^1$};
  \draw (-5.8,3.5) node {$\mathbb{P}^1$};

\end{tikzpicture}\\
\vspace*{5mm}
Figure 2: Reducible semistable degeneration 
\end{center}

We expect the following behaviour:
\begin{conjecture}
Let $\pi:X\rightarrow\P^n$ be a very general Lagrangian fibration, and let $t$ be a general point of the hypersurface $\Delta\subset\P^n$ parametrizing singular fibres. Then $X_t$ is a semistable degeneration of abelian varieties.
\end{conjecture}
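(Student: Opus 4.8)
The plan is to reduce the global statement to a purely local question about the degeneration of abelian varieties transverse to $\Delta$, and then to let very generality force that local degeneration into its simplest, semistable form. First I would choose a general point $t_0\in\Delta$ and a small disc $D\subset\P^n$ meeting $\Delta$ transversally at $t_0$, so that the restriction $f:\mathcal{X}\rightarrow D$ of $\pi$ is a one-parameter degeneration of $n$-dimensional abelian varieties, smooth over $D\setminus\{0\}$, whose total space is smooth (being an open subset of the hyperk\"ahler manifold $X$). By the quoted theorem of Hwang and Oguiso it then suffices to prove that the central fibre $X_{t_0}$ is \emph{semistable}, since their result identifies any codimension-one semistable fibre as a rank-one degeneration of the stated $\P^1$-bundle form.

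By the theory of degenerations of abelian varieties, semistability of $X_{t_0}$ amounts to two conditions: that the central fibre be \emph{reduced}, and that the local monodromy operator $T$ on $H^{1}$ of a nearby fibre be \emph{unipotent}. The monodromy theorem guarantees only quasi-unipotence, so writing the Jordan decomposition $T=T_sT_u$ with $T_s$ of finite order, the conjecture comes down to two independent assertions for very general $X$: that $T_s=I$, and that $X_{t_0}$ carries no multiplicity. These correspond exactly to ruling out, respectively, the additive (potentially good) and the multiple non-semistable fibres appearing in the Matsushita and Hwang--Oguiso lists.

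For each obstruction I would argue that its presence confines $X$ to a countable union of proper closed subsets of the hypersurface $\mathcal{H}\subset\mathcal{M}$, and is therefore excluded by very generality; the guiding model is the very general elliptically fibred K3 surface, whose $24$ singular fibres are all of the semistable nodal type $I_1$, the additive and multiple types occurring only on Noether--Lefschetz-type loci. A multiple fibre over the general point of $\Delta$ is best attacked through the Tate--Shafarevich description of $X$ as a twist of its associated Jacobian fibration $J\rightarrow\P^n$: such a fibre corresponds to prescribed ramification of the twisting class along $\Delta$, a condition failed by a very general (transcendental) twist. A nontrivial finite part $T_s\neq I$, on the other hand, commutes with the entire local system and so reflects an extra symmetry, hence extra endomorphisms, of the generic abelian fibre; this is again an algebraic condition cutting out a proper subvariety, so it cannot persist for very general $X$.

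The main obstacle is to make this principle rigorous and uniform in $n$. Unlike the surface case, where the monodromy lives in $SL(2,\mathbb{Z})$ and the discriminant is a finite set whose collisions are elementary to analyze, for $n\geq 2$ the discriminant $\Delta$ is a hypersurface approached by genuinely higher-dimensional degenerations, and one must control how the local monodromy around a general point of $\Delta$ behaves as $X$ varies in $\mathcal{H}$ --- noting that $\Delta$ itself moves, so that the monodromy type is not a naive deformation invariant. I expect a complete proof to require a deformation theory of the pair $(\pi,\Delta)$ together with a positivity input from the holomorphic symplectic form bounding the discriminant, and that by the present methods only a partial result --- establishing quasi-unipotence and disposing of the multiple-fibre obstruction --- is within reach.
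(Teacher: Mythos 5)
The statement you set out to prove is stated in the paper only as a conjecture, and it remains open there: the paper proves just the weaker Theorem~2, whose surviving cases are exactly the two degenerations your plan must kill, namely multiple fibres with smooth reduction ({\'e}tale quotients of abelian varieties) and fibres of additive Kodaira type $II$, $III$, $IV$. Your reduction of the conjecture to ``reduced central fibre plus unipotent local monodromy'' is reasonable (modulo Hwang--Oguiso), but both of your mechanisms for excluding the bad cases by very generality have genuine gaps, and one is simply wrong. The claim that a nontrivial finite part $T_s\neq I$ of the local monodromy ``reflects an extra symmetry, hence extra endomorphisms, of the generic abelian fibre'' is false: $T_s$ is an automorphism of the local system near $\Delta$, not of any smooth fibre. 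Already for elliptic K3 surfaces, a cuspidal (type $II$) fibre has local monodromy of order six while the nearby smooth fibres merely have $j$-invariant close to $0$ --- they acquire no complex multiplication, and no Noether--Lefschetz-type condition on the generic fibre is imposed; cuspidal fibres occur in codimension one in the moduli of elliptic K3s, not on a CM locus. The known fact that a very general elliptic K3 has only $I_1$ fibres is proved by explicit construction and a $19$-parameter count, and the paper explicitly poses as an open Question the problem of finding a deformation-theoretic argument that eliminates cusps; so the one case where your principle holds is not established by your mechanism, and for $n\geq 2$ nothing in your proposal shows that the locus in $\mathcal{H}$ of fibrations carrying a type $II$, $III$, or $IV$ fibre in codimension one is a countable union of proper closed subsets.

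The multiple-fibre exclusion is likewise only a heuristic. To run the Tate--Shafarevich argument you would need to (i) set up the twist description of $X$ relative to a Jacobian fibration $J\rightarrow\P^n$ \emph{within} the hypersurface $\mathcal{H}$, where $J$, the twisting class, and $\Delta$ all move with $X$ (a difficulty you acknowledge but do not resolve), and (ii) prove that the locus of twists producing a multiple fibre along the moving $\Delta$ is a countable union of proper closed subsets of $\mathcal{H}$. Neither step is carried out, and the examples in Section~4.1 of the paper show that multiple fibres in codimension one genuinely occur for Lagrangian fibrations that are not irreducible holomorphic symplectic, so any proof must exploit irreducibility or very generality in an essential and currently unknown way. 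For comparison, the paper's actual partial result is obtained by a completely different, rigorous route: very generality forces $\rho(X)=1$; Oguiso's generalized Shioda--Tate formula then shows each $\pi^{-1}(\Delta_i)$ is irreducible; and transitivity of the monodromy on the components of $X_t$ forces all components to have equal multiplicity, eliminating the types $I^*_k$, $II^*$, $III^*$, $IV^*$. That argument, however, cannot touch the two cases your proposal targets, which is precisely why the statement is still a conjecture.
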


For K3 surfaces, the conjecture states that a very general elliptic K3 surface has only singular fibres of type $I_k$ in Kodaira's classification. This is certainly true; indeed, a very general elliptic K3 surface will have only $I_1$ singular fibres (rational nodal curves), as can be shown by explicitly finding such a surface and showing that it deforms in a $19$-dimensional family. The challenge is to find a proof of this fact that can be extended to higher dimensional Lagrangian fibrations. Although we do not achieve this, we do find a simple proof of a weaker property of elliptic K3 surfaces that generalizes to give:
\begin{theorem}
\label{main}
Let $\pi:X\rightarrow\P^n$ be a very general Lagrangian fibration, and let $t$ be a general point of the hypersurface $\Delta\subset\P^n$ parametrizing singular fibres. Then $X_t$ is either
\begin{enumerate}
\item an {\'e}tale quotient of an $n$-dimensional abelian variety (such a fibre is singular because it will have multiplicity greater than one),
\item a semistable degeneration of abelian varieties, or
\item a fibration over an $(n-1)$-dimensional abelian variety $A$ by singular elliptic curves of Kodaira type $II$ (a cuspidal rational curve), $III$ (two rational curves meeting at a tacnode), or $IV$ (three rational curves meeting at a point), up to an {\'e}tale cover.
\end{enumerate}
\end{theorem}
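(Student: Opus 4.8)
The plan is to combine the local classification of the general singular fibre with a single global constraint coming from the \emph{very general} hypothesis. First I would invoke the results of Hwang and Oguiso recalled above: over a general point $t\in\Delta$ the fibre $X_t$ is, up to an \'etale cover and the bundle structure over an $(n-1)$-dimensional abelian variety $A$, governed by a one-dimensional transverse degeneration, so the candidates are exactly the Kodaira types $I_k$, $I_k^*$, $II$, $III$, $IV$, $IV^*$, $III^*$, $II^*$, together with multiples of a smooth fibre. Cases (1)--(3) of the theorem are precisely the multiple smooth fibre, the semistable types $I_k$, and the types $II,III,IV$, so the real content is to eliminate the \emph{starred} types $I_0^*,I_k^*,IV^*,III^*,II^*$. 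It is worth stressing at the outset that the local monodromy cannot detect this distinction --- for instance $II$ and $II^*$ share the same local monodromy conjugacy class --- so the argument must use the global geometry of $X$, not Hodge-theoretic data of the fibres alone.

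The global input is that, for $X$ very general on the hypersurface $\mathcal{H}$ of Lagrangian fibrations, the only algebraic class up to scale is the fibre class $f=\pi^*\O_{\P^n}(1)$, which is isotropic for the Beauville--Bogomolov form $q$; hence every prime divisor $D\subset X$ satisfies $[D]=cf$ for some $c>0$, and in particular $q(D)=0$. This already settles the surface case $n=1$, the simple model I wish to generalize: a proper component of a reducible singular fibre of an elliptic K3 surface is a smooth rational $(-2)$-curve $C$, and $C^2=-2\neq 0$ contradicts $[C]\in\Z f$; hence every singular fibre is irreducible, leaving only $I_1$ and $II$. This is the weaker property alluded to in the introduction, and it is exactly cases (2) and (3) when $n=1$.

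To generalize I would apply the constraint $q(D)=0$ to the components of $\pi^{-1}(\Delta_i)$ for each irreducible component $\Delta_i$ of $\Delta$. Since $\Delta_i$ is positive-dimensional for $n\geq 2$, the monodromy along its smooth locus may permute the irreducible components of $X_t$; each orbit sweeps out a single irreducible $\pi$-vertical divisor $D$, forced to have $q(D)=0$. For a semistable fibre the components are the $\P^1$-bundles $\P(\O_A\oplus\mathcal{L})$ with $\mathcal{L}\in\mathrm{Pic}^0A$, and the vanishing $q(D)=0$ is exactly the topological triviality of these rank-two bundles --- Hwang and Oguiso's rank-one condition --- so $I_k$ is consistent, as are the symmetric equal-multiplicity configurations of $III$ and $IV$, the irreducible fibre $II$, and the multiple smooth fibre. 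The decisive point is that monodromy preserves the multiplicity of a component in the fibre, so the components of \emph{distinct} multiplicity occurring in the Dynkin configurations $\widetilde{D}_{k+4},\widetilde{E}_6,\widetilde{E}_7,\widetilde{E}_8$ of the starred fibres cannot be absorbed into a single orbit: each such distinguished component sweeps its own divisor $D$, and I would show that such a $D$ necessarily has $q(D)\neq 0$, contradicting the very general hypothesis and eliminating the starred types.

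The main obstacle is precisely this last step: proving that the distinguished higher-multiplicity components of the starred configurations sweep divisors of nonzero Beauville--Bogomolov square (equivalently, that they cannot be projectivizations of topologically trivial bundles). This is the higher-dimensional replacement for the elementary identity $C^2=-2$ available when $n=1$, and it requires a local analysis of how such a component sits inside the hyperkähler manifold $X$ --- its normal bundle along $A$ and the way it meets the holomorphic symplectic form --- rather than any feature of the variation of Hodge structure. A secondary technical point is to propagate the \'etale covers appearing in the Hwang--Oguiso normal forms, so that after verifying the statement on a cover fibred cleanly over $A$ the conclusion descends to $X_t$ itself, yielding exactly the three cases of the statement.
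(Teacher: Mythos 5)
Your outline assembles most of the right ingredients, and up to the decisive step it runs parallel to the paper: very generality gives $\rho(X)=1$; Hwang--Oguiso's classification lists the possible general singular fibres; and monodromy around $\Delta_i$ preserves multiplicities of components, so in a starred configuration ($I_k^*$, $II^*$, $III^*$, $IV^*$) the components of distinct multiplicity must sweep out \emph{distinct} prime divisors $D_1\neq D_2$ inside $\pi^{-1}(\Delta_i)$. Two smaller slips before the main issue: your candidate list omits type $A_\infty$ (an infinite chain of rational curves wrapping around the fibre), which is part of Theorem~\ref{hoclassification}, belongs to case 2 of Theorem~\ref{main}, and genuinely occurs (Beauville--Mukai systems, Section~\ref{later2}); and the aside that $II$ and $II^*$ share a local monodromy conjugacy class is false (their monodromies are inverse elliptic elements, which are non-conjugate in $\mathrm{SL}_2(\Z)$), though nothing rests on it.

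The genuine gap is your finishing move. You propose to show, by local analysis of normal bundles, that the divisor $D$ swept out by a distinguished component of a starred fibre has $q(D)\neq 0$, and you concede this is the main obstacle. That step is not only missing but misdirected: under your own standing hypothesis $\rho(X)=1$, \emph{every} divisor on $X$ has class a rational multiple of $L$ and hence $q$-square zero, so no local computation about an actual divisor $D\subset X$ can ever produce $q(D)\neq 0$; moreover, starred fibres do occur on Lagrangian fibrations of higher Picard number (e.g.\ $\mathrm{Hilb}^nS$ of an elliptic K3 surface has fibres of type $I_0^*$ over the big diagonal), so there is no intrinsic local obstruction to be found --- the obstruction is global. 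What closes the argument is simpler and you already hold the ingredients: the two distinct prime divisors $D_1\neq D_2$ over the same $\Delta_i$ mean that $\pi^{-1}(\Delta_i)$ is \emph{reducible}, and this by itself contradicts $\rho(X)=1$. The paper deduces this from Oguiso's Shioda--Tate formula, whose sectionless (non-projective) degenerate form reads $\rho(X)=\rho(\P^n)+\sum_i(m_i-1)$, where $m_i$ is the number of irreducible components of $\pi^{-1}(\Delta_i)$; so $\rho(X)=1$ forces every $m_i=1$. Alternatively, argue directly: $[D_1]=cL$ with $c>0$; since the fibre $X_t$ is connected, some characteristic leaf $C\not\subset D_1$ meets $D_1$, whence $D_1\cdot C>0$ while $L\cdot C=0$ because $\pi$ contracts $C$ --- a contradiction. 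Substituting this for your $q(D)\neq 0$ step turns your outline into a complete proof, which is then essentially the paper's proof of Proposition~\ref{prop}: irreducibility of each $\pi^{-1}(\Delta_i)$ plus transitivity of monodromy on fibre components forces equal multiplicities, eliminating exactly the starred types, and the surviving possibilities in Theorem~\ref{hoclassification} give the three cases of Theorem~\ref{main}.
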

Of course, we believe that only the second case is possible. Theorem~\ref{main} has been obtained independently by C.\ Lehn (Theorem~5.7 of~\cite{lehn16}) as a corollary of his generalization of Voisin's results on deformations of Lagrangian submanifolds to deformations of Lagrangian normal crossing subvarieties. Our proof is more direct: a standard argument shows that a very general Lagrangian fibration must have Picard number $\rho(X)=1$, and this places restrictions on the structure of the discriminant hypersurface $\pi^{-1}(\Delta)$ from which the theorem follows.

Note that it is {\em not\/} true that the singular fibres in codimension two (or higher) must be semistable. For example, let $\pi:X\rightarrow\P^n$ be a Beauville-Mukai system~\cite{beauville99, mukai84}, i.e., the compactified relative Jacobian of a complete linear system of curves on a K3 surface. There will be cuspidal curves in codimension two in this linear system, and their compactified Jacobians will not be semistable; see~\cite{sawon15} for details. Moreover, we can make this Lagrangian fibration very general by deforming it in a $20$-dimensional family, inside the $21$-dimensional moduli space of deformations of $X$, while preserving the existence of these non-semistable singular fibres in codimension two.

In Section~2 we recall some facts about singular fibres of elliptic K3 surfaces. In Section~3 we extend these results to higher dimensional Lagrangian fibrations, thereby proving our main result, Theorem~\ref{main}. Section~4 contains a number of illustrative examples.

The author would like to thank Concettina Galati, Andreas Knutsen, and Christian Lehn for helpful conversations, and the Erwin Schr{\"o}dinger Institute for hospitality. The author gratefully acknowledges support from the NSF, grant numbers DMS-1206309 and DMS-1555206.

\section{K3 surfaces}

We begin by recalling Kodaira's classification of singular fibres of elliptic surfaces.

\begin{thm}[see Barth et al.~\cite{bhpv04}]
\label{kodaira}
A non-multiple singular fibre of a minimal elliptic surface must be one of the following types:
\begin{itemize}
\item $I_k$, $k\geq 1$, a cycle of $k$ rational curves with intersection matrix given by the Cartan matrix of the affine Dynkin diagram of type $\tilde{A}_{k-1}$ (with $I_1$ a nodal rational curve),
\item $II$, a cuspidal rational curve,
\item $III$, two rational curves meeting at a tacnode,
\item $IV$, three rational curves meeting at a single point,
\item $I^*_k$, $k\geq 0$, $k+5$ rational curves with intersection matrix given by $\tilde{D}_{k+4}$,
\item $II^*$, nine rational curves with intersection matrix given by $\tilde{E}_8$,
\item $III^*$, eight rational curves with intersection matrix given by $\tilde{E}_7$,
\item $IV^*$, seven rational curves with intersection matrix given by $\tilde{E}_6$.
\end{itemize}
A multiple singular fibre must be of type $mI_0$, a smooth elliptic curve with multiplicity $m\geq 2$, or of type $mI_k$ with $k\geq 1$, i.e., type $I_k$ with multiplicity $m\geq 2$.
\end{thm}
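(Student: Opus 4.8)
The plan is to study a singular fibre $F=\pi^{-1}(0)=\sum_i m_iC_i$ of the minimal elliptic surface $\pi:S\rightarrow C$ (minimal meaning no $(-1)$-curve lies in a fibre) through the intersection theory of its components, supplemented by the local monodromy around $0$. Since all fibres are algebraically equivalent and have arithmetic genus one, adjunction gives $F^2=0$ and $K_S\cdot F=2p_a(F)-2=0$, and also $F\cdot C_i=0$ for every component. The first step is to pin down the components. If $F$ is reducible then connectedness of the fibre forces each $C_i$ to meet some $C_j$, so $0=F\cdot C_i=m_iC_i^2+\sum_{j\neq i}m_j(C_i\cdot C_j)$ with a strictly positive sum, whence $C_i^2<0$. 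Combined with $p_a(C_i)\geq 0$ and the absence of $(-1)$-curves, a short case analysis on $K_S\cdot C_i$ rules out the value $-1$ and, using $\sum_i m_i(K_S\cdot C_i)=0$ with all terms nonnegative, forces $K_S\cdot C_i=0$; adjunction then yields $p_a(C_i)=0$ and $C_i^2=-2$, so every component is a smooth rational $(-2)$-curve. The irreducible reduced case is immediate: a reduced irreducible curve of arithmetic genus one is smooth elliptic, or has a single node ($I_1$), or a single cusp ($II$).

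Next I would invoke Zariski's lemma: the intersection matrix $(C_i\cdot C_j)$ of the components of the connected fibre is negative semidefinite with radical spanned over $\Q$ by the multiplicity vector $(m_i)$. For a reducible fibre this is an integral, connected, negative semidefinite configuration of $(-2)$-curves whose radical is generated by a positive integral vector. Such data are classified by the affine Dynkin diagrams of type $\tilde A$, $\tilde D$, $\tilde E$, with the $m_i$ equal to the Kac marks; translating back to dual graphs of curves this produces exactly $I_k=\tilde A_{k-1}$, $I^*_k=\tilde D_{k+4}$, $IV^*=\tilde E_6$, $III^*=\tilde E_7$, and $II^*=\tilde E_8$. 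The genuinely delicate point is that neither the intersection matrix nor the constraint $p_a(F)=1$ (which only forces the total delta-invariant of the singular points of $F$ to equal the number of components) records whether two components meet transversally at distinct points or tangentially at a single point: $I_2$ and $III$, and $I_3$ and $IV$, are indistinguishable at this level.

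To prove that the list is complete — that no other dual graphs occur and that the degenerate configurations $III$ and $IV$ genuinely arise — and to handle multiplicities, I would bring in the local monodromy $T\in\mathrm{SL}(2,\Z)$ acting on $H^1$ of the nearby smooth fibre. By the monodromy theorem $T$ is quasi-unipotent, and the finite list of its conjugacy classes, namely the unipotent classes $T=\left(\begin{smallmatrix}1 & k\\ 0 & 1\end{smallmatrix}\right)$ and the finite-order classes of orders $2,3,4,6$, matches the fibre types through the Weierstrass model: the valuations of $c_4$, $c_6$, and $\Delta$ at $0$, analyzed by Tate's algorithm, recover each Kodaira type and show the correspondence is a bijection. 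Finally, multiple fibres $mI_k$ are treated by passing to the pullback under a degree-$m$ base change totally ramified at $0$ (equivalently, undoing a logarithmic transformation), which renders the fibre reduced; the same monodromy analysis shows that only the $I_k$ types (including the smooth $I_0$) can carry multiplicity, yielding $mI_0$ and $mI_k$.

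The main obstacle is completeness together with the separation of the non-transverse cases. The intersection-theoretic skeleton produces the dual graphs but cannot exclude spurious configurations, cannot distinguish $I_2$ from $III$ or $I_3$ from $IV$ (their intersection matrices and total delta-invariants agree), and gives no control over multiplicities or multiple fibres. Overcoming this requires the full local theory — quasi-unipotency of the monodromy together with the explicit resolution of the Weierstrass model — which is precisely what elevates Kodaira's classification above a formal lattice computation.
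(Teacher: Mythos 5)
First, a point of comparison: the paper does not prove this theorem at all. It is stated as classical background with a citation to Barth--Hulek--Peters--Van de Ven, so there is no proof of the paper's own to measure yours against; your proposal has to be judged on its own merits. Its first half is indeed the standard textbook argument (essentially the one in the cited reference): adjunction and minimality force every component of a reducible fibre to be a smooth rational $(-2)$-curve, Zariski's lemma makes the intersection matrix negative semidefinite with radical spanned by the multiplicity vector, and the classification of connected symmetric affine Cartan matrices yields exactly the $\tilde{A}$, $\tilde{D}$, $\tilde{E}$ configurations with multiplicities given by the marks. One correction to your framing, though: this skeleton, together with an elementary analysis of local intersection multiplicities, already gives completeness for non-multiple fibres, with no monodromy or Weierstrass theory needed. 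If $C_1\cdot C_2=2$, the two curves meet either at two distinct transverse points ($I_2$) or tangentially at one point ($III$); if three curves pairwise satisfy $C_i\cdot C_j=1$, the three intersection points are either pairwise distinct ($I_3$) or all equal ($IV$), since two of them coinciding forces the third to coincide; and for longer cycles the matrix leaves no room for degeneration. The theorem lists $III$ and $IV$ alongside $I_2$ and $I_3$ precisely because both realizations occur, so there is nothing to ``separate'' here; what you call the main obstacle is not one.

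The genuine gap is in your treatment of multiple fibres. Local monodromy cannot ``show that only the $I_k$ types can carry multiplicity,'' because monodromy is blind to multiplicity: $mI_k$ has exactly the same (unipotent) monodromy as $I_k$, and $mI_0$ has trivial monodromy, so nothing about the conjugacy class of $T$ excludes a hypothetical fibre $m\cdot III$ or $m\cdot I_0^*$. Moreover, Tate's algorithm presupposes a Weierstrass model, i.e.\ a section, and no section can pass through a multiple fibre (a section meets every fibre with intersection number one, while every component of $mF'$ has multiplicity divisible by $m$); so that tool is unavailable exactly where you invoke it, and even for non-multiple fibres without a section one would first have to pass to the Jacobian fibration and prove the fibre types agree, itself a nontrivial step. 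The standard way to close the gap is the torsion argument: writing the fibre as $mF'$ with $F'$ primitive, the line bundle $\O_{F'}(F')$ has order exactly $m$ in $\mathrm{Pic}^0(F')$; for the additive types $II$, $III$, $IV$, $I_k^*$, $II^*$, $III^*$, $IV^*$ one has $\mathrm{Pic}^0(F')\cong\C$, which is torsion-free, whereas for $I_0$ and $I_k$ it is an elliptic curve or $\C^*$, so $m\geq 2$ forces type $I$. Alternatively, your base-change idea can be completed without monodromy: the normalization of the degree-$m$ base change is an {\'e}tale $\Z/m$-cover of a neighbourhood of $F'$, so $F'$ admits a connected {\'e}tale cover of degree at least two; but the additive fibres are simply connected (a cuspidal rational curve is homeomorphic to a sphere, and the other additive types are trees of spheres), so only smooth elliptic curves and cycles of rational curves, whose fundamental groups are $\Z^2$ and $\Z$, can occur as reductions of multiple fibres. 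Either way, the missing ingredient is this torsion or covering-space analysis, not the monodromy theorem.
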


Let $S\rightarrow\P^1$ be an elliptic K3 surface, with fibre class $F\in\H^{1,1}(S)\cap\H^2(S,\Z)$. If we deform $S$ so that $F$ remains algebraic, i.e., of type $(1,1)$, then the resulting K3 surface will still be elliptic. This means that elliptic K3 surfaces are codimension one inside the $20$-dimensional moduli space of all K3 surfaces. Moreover, a very general elliptic K3 surface will have Picard number $\rho=1$; note that it is non-projective and does not admit a section (or even a multi-valued rational section).

\begin{lemma}
Let $S\rightarrow\P^1$ be a very general elliptic K3 surface. Then every singular fibre of $S$ is reduced and irreducible. Thus by Kodaira's classification, every singular fibre is either of type $I_1$ (nodal rational curve) or type $II$ (cuspidal rational curve).
\end{lemma}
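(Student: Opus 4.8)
The plan is to leverage the Picard-number constraint $\rho(S)=1$ (recalled just before the statement) to control the classes of the components of every fibre. Since $S$ is a K3 surface we have $\mathrm{Pic}(S)=NS(S)$, and the fibre class $F$ is primitive, so $\rho(S)=1$ gives $NS(S)=\Z F$ with $F^2=0$. I will take the primitivity of $F$ as the technical heart of the argument: it is the standard fact that the fibre class of an elliptic fibration on a K3 surface is primitive, which one checks from Riemann--Roch (if $F=dH$ with $H$ effective and $d\geq 2$, then $|H|$ is already a pencil and $h^0(dH)\geq d+1>2=h^0(F)$, a contradiction, since $\pi_*\O_S=\O_{\P^1}$ forces $h^0(\O_S(F))=2$).

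First I would fix a general singular fibre $X_t$ and write it as a divisor $X_t=\sum_i m_i C_i$ with $C_i$ the distinct irreducible components and $m_i\geq 1$. Every $C_i$ is an effective curve, so its class lies in $NS(S)=\Z F$; say $[C_i]=a_i F$. Pairing with a K\"ahler class -- here I use that $S$ is a \emph{non-projective} K3, so the positivity comes from the K\"ahler form rather than from an ample class -- shows $a_i\geq 1$. All fibres are homologous, so $[X_t]=F$, and comparing classes gives $\sum_i m_i a_i = 1$. Since every $m_i$ and $a_i$ is a positive integer, this forces a single summand with $m_1=a_1=1$: that is, $X_t=C_1$ is reduced and irreducible in one stroke. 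This simultaneously rules out every reducible Kodaira type (whose components are smooth rational $(-2)$-curves) and every multiple fibre.

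It then remains to identify $C:=C_1$. From $[C]=F$ and the adjunction formula on a K3 surface ($K_S=0$) I obtain $2p_a(C)-2=C^2=F^2=0$, so $p_a(C)=1$. An irreducible curve of arithmetic genus one is either smooth -- in which case $X_t=C$ would be a smooth elliptic fibre, contradicting the singularity of $X_t$ -- or has a single node or a single cusp. By Kodaira's classification (Theorem~\ref{kodaira}), the only reduced, irreducible, \emph{singular} fibres are those of type $I_1$ and type $II$, which completes the proof.

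The step I expect to be the genuine obstacle is the primitivity of the fibre class $F$, since this is exactly what excludes multiple fibres of type $mI_0$ and $mI_1$; the reducibility of the other types falls out immediately from the genus computation. An alternative route to reducedness, which I would keep in reserve, is the canonical bundle formula $0=K_S=\pi^*(K_{\P^1}+L)+\sum_i(m_i-1)P_i$ with $\deg(K_{\P^1}+L)=0$ on a K3, which forces $\sum_i(m_i-1)P_i\equiv 0$ in $NS(S)$ and hence, by positivity of the effective classes $P_i$, the absence of any multiple fibre.
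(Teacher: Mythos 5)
Your proof is correct, but it takes a genuinely different route from the paper's. The paper argues via a degenerate Shioda--Tate formula: for a non-projective elliptic surface the N{\'e}ron--Severi group is spanned by the fibre class $F$ together with the irreducible components of the singular fibres, so $\rho(S)=1+\sum_i(m_i-1)$, and $\rho(S)=1$ kills reducibility; reducedness is then handled separately by Kodaira's canonical bundle formula (multiple fibres would contribute nontrivially to $K_S$, which is trivial) --- note that your ``reserve'' argument is precisely the paper's treatment of this second point. You instead pin down the lattice exactly, $NS(S)=\Z F$, which requires the extra input of primitivity of $F$; your Riemann--Roch/pencil argument for that is sound (and genuinely needed: rank one alone only gives $NS(S)=\Z H$ with $F=dH$, and your equation $\sum_i m_ia_i=1$ would become $\sum_i m_ia_i=d$, killing the conclusion if $d\geq 2$). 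Once $NS(S)=\Z F$ is in hand, writing the fibre as a divisor $\sum_i m_iC_i$ and pairing with a K{\"a}hler class yields reducedness and irreducibility in one stroke, with no appeal to the canonical bundle formula, and adjunction identifies the fibre. The trade-off is this: your argument is more elementary and self-contained, but it is tied to the surface case, since decomposing the fibre as a divisor has no analogue when the fibres have codimension $n>1$; the paper's Shioda--Tate argument is exactly the one that generalizes, via Oguiso's higher-dimensional Shioda--Tate formula, to prove Proposition~\ref{prop} for Lagrangian fibrations in Section~3, which is the stated motivation for the choice of proof.
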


\begin{proof} The N{\'e}ron-Severi group of a non-projective elliptic surface is spanned by the fibre class $F$ and the irreducible components of the singular fibres. Suppose there are $k$ singular fibres $S_1,\ldots,S_k$ with $m_1,\ldots,m_k$ irreducible components, respectively. Obviously some linear combination of the components of $S_i$ is linearly equivalent to $F$, so each $S_i$ really contributes $m_i-1$ additional independent classes to the N{\'e}ron-Severi group. Thus the rank of the N{\'e}ron-Severi group is
$$\rho(S)=1+\sum_{i=1}^k (m_i-1).$$
Recall that the Shioda-Tate formula~\cite{shioda72} for an elliptic surface with a section is
$$\rho(S)=2+\mathrm{rank}MW(\pi)+\sum_{i=1}^k(m_i-1),$$
where $MW(\pi)$ is the Mordell-Weil group of sections of $\pi:S\rightarrow\P^1$. Our formula above is essentially a degenerate version of the Shioda-Tate formula for non-projective surfaces. Since a very general elliptic K3 surface has $\rho(S)=1$, we immediately see that $m_i=1$ for all $i$, i.e., every singular fibre of $S$ is irreducible.

Multiple fibres contribute non-trivially to Kodaira's formula for the canonical bundle of an elliptic surface, but K3 surfaces have trivial canonical bundles, and thus they cannot have any multiple fibres. Thus every singular fibre of $S$ is reduced.
\end{proof}

\begin{question}
Although it does not follow from the above arguments, very general elliptic K3 surfaces actually have only nodal rational curves as singular fibres. This can be proved by explicitly constructing such a K3 surface and then showing that there are $19$-parameters describing its deformations. Is there a deformation argument that eliminates cuspidal rational curves? Given an elliptic fibration over a disc with a cuspidal rational curve over $0$, we can deform it to a fibration with two nodal rational curves. However, we need a global argument to show that such a deformation fits in to an elliptic K3 surface.
\end{question}

\section{Higher dimensions}

We want to extend the ideas of the previous section to higher dimensional Lagrangian fibrations, i.e., fibrations $\pi:X\rightarrow\P^n$ where $X$ is an irreducible holomorphic symplectic manifold and the general fibre is an $n$-dimensional abelian variety (see~\cite{sawon03} and the references therein). We start by describing Hwang and Oguiso's~\cite{ho09,ho11} Kodaira-type classification of general singular fibres. By Proposition~3.1(2) of~\cite{ho09} the discriminant locus $\Delta\subset\P^n$ parametrizing singular fibres is a hypersurface. Roughly speaking, the main observation of Hwang and Oguiso is that for a general singular fibre $X_t$, above a general point $t\in\Delta$, there is a residual $(n-1)$-dimensional abelian variety present and the fibre is degenerating in the one remaining dimension. To state this precisely, let $V$ be a component of the reduction $(X_t)_{\mathrm{red}}$ of $X_t$, and let $\hat{V}$ be its normalization. Then the Albanese map $\hat{V}\rightarrow\mathrm{Alb}(\hat{V})$ is a fibre bundle with fibre either $\P^1$ or an elliptic curve. The image of such a fibre in $(X_t)_{\mathrm{red}}$ is called a {\em characteristic leaf\/}; if two (or more) leaves meet they form a {\em characteristic curve\/}, and once we add in multiplicities coming from the multiplicities of the components of $X_t$ we obtain a {\em characteristic $1$-cycle\/}. 

Hwang and Oguiso gave a classification of these characteristic $1$-cycles.
\begin{thm}[Hwang-Oguiso~\cite{ho09,ho11}]
\label{hoclassification}
Let $X_t$ be a general singular fibre with multiplicity $m$. Then all characteristic $1$-cycles $\sum_im_iC_i$ in $X_t$ are isomorphic, and the $1$-cycle $\sum_i\frac{m_i}{m}C_i$ is either
\begin{enumerate}
\item a smooth elliptic curve,
\item one of the singular elliptic fibres of Kodaira's classification, as in Theorem~\ref{kodaira}, or
\item of type $A_{\infty}$, i.e., a $1$-cycle $\sum_{i\in\mathbb{Z}}C_i$ consisting of a chain of infinitely many rational curves $C_i$, with $C_i$ meeting $C_j$ if and only if $j=i\pm 1$. 
\end{enumerate}
\end{thm}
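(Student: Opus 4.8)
The plan is to use the holomorphic symplectic form $\sigma$ on $X$ to produce a canonical foliation on the discriminant divisor, and then to reduce the classification of characteristic $1$-cycles to the one-dimensional situation governed by Kodaira's list in Theorem~\ref{kodaira}.

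First I would set up the \emph{characteristic foliation}. Writing $D=\pi^{-1}(\Delta)\subset X$, the restriction $\sigma|_D$ to the odd-dimensional divisor $D$ has a one-dimensional kernel at each smooth point, and this kernel defines a foliation $\F$ on $D$. The Lagrangian condition gives, over the smooth locus, the isomorphism $T_{X/\P^n}\cong\pi^*\Omega_{\P^n}$; using this I would check that $\F$ is tangent to the fibres of $\pi$, so that its leaves lie inside the singular fibres $X_t$ for $t\in\Delta$. Concretely, the conormal line $N^*_{\Delta/\P^n}$, pulled back to $D$ and transported through the symplectic identification, picks out the degenerating direction inside each fibre, while the transverse directions should assemble into the residual $(n-1)$-dimensional abelian variety. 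I would then identify the leaves of $\F$ with the characteristic leaves, i.e.\ with the fibres of the Albanese maps $\hat{V}\rightarrow\mathrm{Alb}(\hat{V})$ of the normalized components, so that $A=\mathrm{Alb}(\hat{V})$ for the top-dimensional component.

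The crucial structural step is to show that, over a general point $t\in\Delta$, the fibre $X_t$ is (after normalization and an \'etale cover) a bundle over $A$ whose fibre is a single one-parameter degeneration of elliptic curves. This is where I expect the main obstacle to lie: one must integrate $\F$ and control its behaviour across the non-normal loci where several components of $X_t$ meet, ruling out any transverse geometry other than a single degenerating direction over $A$. Granting this, a general disc transverse to $\Delta$ restricts $X$ to a family of $n$-folds degenerating over $0$; slicing transverse to the $A$-directions reduces the problem to a degeneration of elliptic curves over a disc, precisely the setting of Kodaira's classification.

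Finally I would read off the three cases. The generic characteristic leaf is an elliptic curve, giving case~(1); a genuine degeneration produces one of Kodaira's singular fibres, giving case~(2), where dividing $\sum_i m_iC_i$ by the fibre multiplicity $m$ normalizes the multiplicities to the canonical affine Dynkin markings. The genuinely new possibility, type $A_{\infty}$, should arise exactly when the chain of rational components fails to close up — for example when passing to the relevant cover unrolls a cyclic $I_k$ configuration into an infinite chain — and I would argue that this is the only configuration beyond Kodaira's list consistent with a one-parameter degeneration with non-closing monodromy. That all characteristic $1$-cycles in $X_t$ are isomorphic then follows from the invariance of the whole configuration under translation by $A$, so that the $1$-cycle does not vary as its leaf moves over the base.
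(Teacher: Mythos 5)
First, a point of orientation: the paper does not prove this statement at all. Theorem~\ref{hoclassification} is quoted from Hwang--Oguiso~\cite{ho09,ho11}, and the paper's own contribution (Proposition~\ref{prop} and Theorem~\ref{main}) merely builds on it. So your proposal has to be measured against the proofs in the cited papers. Their broad strategy --- the characteristic foliation cut out by the kernel of $\sigma$ restricted to $\pi^{-1}(\Delta)_{\mathrm{red}}$, the fact that it is tangent to the fibres, and the identification of its leaves with the fibres of the Albanese maps of the normalized components --- is exactly what you reconstruct in your first two paragraphs, and that part of the sketch is sound.

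The gap is where you yourself flag it, and it is not a technical detail but the entire content of the theorem. Your reduction to Kodaira's classification via a two-dimensional slice transverse to the $A$-directions cannot work even in principle: Kodaira's theorem classifies central fibres of \emph{proper} relatively minimal elliptic surfaces over a disc, and such a fibre always has finitely many components, so an argument that literally lands in that setting would prove that type $A_{\infty}$ never occurs --- contradicting the lemma in Section~4.2 of this very paper, where the general singular fibre of a Beauville--Mukai system has characteristic $1$-cycle of type $A_{\infty}$. The obstruction is concrete: there the fibre $X_t$ is irreducible, its normalization is a $\P^1$-bundle over $\mathrm{Jac}^d\tilde{C}$ glued to itself by translation by $\O(p-q)$, and when $\O(p-q)$ is non-torsion the chain of leaves through a point never closes up; its support is an infinite union of rational curves whose closure is not even a curve (it contains a positive-dimensional family of leaves), so no surface germ proper over a transverse disc can contain the cycle as its central fibre. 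Handling this wrap-around phenomenon, and pinning down the allowed multiplicities $m$ and the normalization $\sum_i\frac{m_i}{m}C_i$, requires analysing the gluing data on the normalization directly, which is what Hwang--Oguiso do in~\cite{ho09,ho11} and what your sketch replaces with ``I would argue that this is the only configuration.'' Separately, your final step --- that all characteristic $1$-cycles are isomorphic by ``invariance under translation by $A$'' --- appeals to an action that does not exist: as the Introduction of the paper points out, the gluings do not take $0$ to $0$, so $X_t$ is in general not fibred over $A$ and carries no $A$-action. The correct mechanism is the Hamiltonian action of $T^*_t\P^n\cong\C^n$ on the fibre (flows of pullbacks of functions on the base); a subgroup complementary to the conormal direction of $\Delta$ moves leaves to leaves and cycles to cycles, and this is the tool needed even to state the invariance correctly.
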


A singular fibre that is a product of an $(n-1)$-dimensional abelian variety and a singular elliptic curve from Kodaira's classification will have characteristic $1$-cycle of the same type as the singular elliptic curve. For example, the product of an $(n-1)$-dimensional abelian variety and an elliptic curve of type $I_2$ will have characteristic $1$-cycle of type $I_2$; such a singular fibre will have two irreducible components. But singular fibres need not be products, and so it is possible for an irreducible singular fibre to have a reducible characteristic $1$-cycle, as illustrated in Figure $3$.

\begin{center}
\vspace*{3mm}
\begin{tikzpicture}[scale=1.0,>=stealth]
  \draw (-2,0) -- (-2,2.5);
  \draw (-6,0) -- (-6,2.5);
  \draw[thick, blue] (-2,0) -- (-6,0);
  \draw[thick, blue] (-2,2.5) -- (-6,2.5);
  \draw[very thick, red] (-5,0) -- (-5,2.5);
  \draw[very thick, red] (-3,0) -- (-3,2.5);
  \draw[very thin, gray, ->] (-4.9,2.4) -- (-3.1,0.1);
  \draw[very thin, gray] (-2.9,2.4) -- (-2.1,1.3);
  \draw[very thin, gray, ->] (-5.9,1.2) -- (-5.1,0.1);
  
  \draw[->] (-1.2,1.2) -- (-0.7,1.2);
  
  \draw (0,0) .. controls (0.2,0.1) and (0.7,0.5) .. (0.8,0.6);
  \draw[dashed] (0.8,0.6) .. controls (1.5,1.3) and (1.5,2.6) .. (0.8,2.7);
  \draw (0.8,2.7) .. controls (0.1,2.5) and (0.1,1.2) .. (0.8,0.6);
  \draw[dashed] (0.8,0.6) .. controls (0.7,0.7) and (1.2,0.3) .. (1.4,0.2);
  
  \draw (4,-1) .. controls (4.2,-0.9) and (4.7,-0.5) .. (4.8,-0.4);
  \draw (4.8,-0.4) .. controls (5.5,0.3) and (5.5,1.6) .. (4.8,1.7);
  \draw (4.8,1.7) .. controls (4.1,1.5) and (4.1,0.2) .. (4.8,-0.4);
  \draw (4.8,-0.4) .. controls (4.7,-0.3) and (5.2,-0.7) .. (5.4,-0.8);
  
  \draw (0,0) -- (4,-1);
  \draw (0.8,2.7) -- (4.8,1.7);
  \draw[thick, blue] (0.8,0.6) -- (4.8,-0.4);
  \draw[dashed] (1.4,0.2) -- (4.6,-0.6);
  \draw (4.6,-0.6) -- (5.4,-0.8);

  \draw[very thick, red] (0.5,-0.125) .. controls (1,0) and (1.2,0) .. (1.8,0.35);
  \draw[dashed, very thick, red] (1.8,0.35) .. controls (2.4,0.7) and (3.3,2.0) .. (2.8,2.2);
  \draw[very thick, red] (2.8,2.2) .. controls (2.0,1.9) and (3.2,0.15) .. (3.8,-0.15);
  \draw[dashed, very thick, red] (3.8,-0.15) .. controls (4.3,-0.45) and (4.3,-0.5) .. (4.7,-0.6);
  
  \draw[very thick, red] (2.5,-0.625) .. controls (3,-0.5) and (3.2,-0.5) .. (3.8,-0.15);
  \draw[dashed, very thick, red] (3.8,-0.15) .. controls (4.4,0.2) and (5.3,1.5) .. (4.8,1.7);
  \draw[very thick, red] (0.8,2.7) .. controls (0.0,2.4) and (1.2,0.65) .. (1.8,0.35);
  \draw[dashed, very thick, red] (1.8,0.35) .. controls (2.3,0.05) and (2.3,0) .. (2.7,-0.1);
  
\end{tikzpicture}\\
\vspace*{5mm}
Figure 3: Irreducible singular fibre with characteristic $1$-cycle of type $I_2$ 
\end{center}

\begin{remark}
Unlike elliptic K3 surfaces, the singular fibres of Lagrangian fibrations can have multiplicities. Hwang and Oguiso~\cite{ho11} showed that various values up to and including six are possible, depending on the type of the (characteristic $1$-cycle of the) singular fibre. In particular, it is possible for the (reduced) characteristic $1$-cycle of a {\em singular\/} fibre to be a {\em smooth\/} elliptic curve if the fibre has multiplicity $m>1$. Some examples will be given in Section~\ref{later}.
\end{remark}

\begin{remark}
Of course, a singular fibre cannot have infinitely many components. Thus a characteristic $1$-cycle of type $A_{\infty}$ must `wrap around' the singular fibre. An example of a reduced and irreducible singular fibre with characteristic $1$-cycle of type $A_{\infty}$ will be given in Section~\ref{later2}.
\end{remark}

Let $\pi:X\rightarrow\P^n$ be a Lagrangian fibration with $n\geq 2$. Let $F\in\H^{n,n}(X)\cap\H^{2n}(X,\Z)$ be the fibre class and let $L\in\H^{1,1}(X)\cap\H^2(X,\Z)$ be the pullback of a hyperplane in $\P^n$. Inside the moduli space $\mathcal{M}$ of deformations of $X$ as a complex manifold, there is a hypersurface $\mathcal{H}_F$ parametrizing deformations such that $F$ remains algebraic (see Voisin~\cite{voisin92}) and there is also a hypersurface $\mathcal{H}_L$ parametrizing deformations such that $L$ remains algebraic. In fact, these hypersurfaces are identical and Matsushita~\cite{matsushita05, matsushita16} proved that they parametrize deformations of $X$ that remain Lagrangian fibrations. In a family of deformations of $X$, the Picard number is upper semicontinuous; Oguiso~\cite{oguiso03} proved that the Picard number jumps up on a dense countable subset. More importantly for us, a family parametrized by $\mathcal{M}^{\prime}\subset\mathcal{M}$ of codimension $k$ cannot contain only deformations of $X$ with Picard numbers $>k$. In particular, a family parametrized by a hypersurface in $\mathcal{M}$ must contain deformations with Picard numbers $1$ (or $0$). It follows that a very general Lagrangian fibration will have Picard number $\rho(X)=1$; it will also be non-projective and will not admit a (multi-valued rational) section.

Our main result, Theorem~\ref{main}, follows directly from the following proposition: the three cases for the characteristic $1$-cycle correspond directly to the three cases for the structure of the singular fibre in Theorem~\ref{main}.
\begin{proposition}
\label{prop}
Let $\pi:X\rightarrow\P^n$ be a very general Lagrangian fibration, and let $t$ be a general point of the discriminant locus $\Delta\subset\P^n$. Then the (reduced) characteristic $1$-cycle of the singular fibre $X_t$ is either
\begin{enumerate}
\item a smooth elliptic curve,
\item a singular elliptic curve of Kodaira type $I_k$ with $k\geq 1$, or of type $A_{\infty}$, or
\item a singular elliptic curve of Kodaira type $II$, $III$, or $IV$.
\end{enumerate}
\end{proposition}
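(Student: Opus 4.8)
The plan is to leverage the single numerical input that a very general Lagrangian fibration has $\rho(X)=1$, together with the fact that the pullback class $L=\pi^*\O_{\P^n}(1)$ generates $\mathrm{NS}(X)\otimes\Q$. The elementary but decisive observation is that $L$ restricts trivially to every fibre, so $L\cdot C=0$ for any curve $C$ contained in a fibre of $\pi$; since every divisor class on $X$ is a rational multiple of $L$, this forces
\[
D\cdot C=0
\]
for \emph{every} divisor $D$ on $X$ and every curve $C$ lying in a fibre. I would apply this with $C$ ranging over the characteristic leaves $C_i$ of the general singular fibre $X_t$ and $D$ ranging over the irreducible components of the discriminant divisor $\pi^{-1}(\Delta)$.

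Next I would set up the standard dictionary between the components of $\pi^{-1}(\Delta)$ and those of $X_t$, and carry out a local intersection computation. Over a general (hence smooth) point $t\in\Delta$, the irreducible components $V_1,\dots,V_p$ of $(X_t)_{\mathrm{red}}$ are permuted by the monodromy of $\pi$ around $\Delta$, and the irreducible components $D_1,\dots,D_N$ of $\pi^{-1}(\Delta)$ lying over this component of $\Delta$ correspond precisely to the monodromy orbits $O_1,\dots,O_N$; since monodromy acts through automorphisms of the family it preserves multiplicities, so all $V_i$ in a common orbit share the same multiplicity. Passing to a disc $B$ transverse to $\Delta$ at $t$, the central fibre of $\pi^{-1}(B)\to B$ is the divisor $\sum_i m_iV_i$, and for a leaf $C_i\subset V_i$ the intersection numbers $a_{i,i'}:=V_{i'}\cdot C_i$ form, up to sign, an affine Cartan-type matrix: the diagonal entries $a_{i,i}$ are negative while the off-diagonal entries $a_{i,i'}$ are non-negative, counting the points where $C_i$ meets $V_{i'}$. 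Because $D_j$ restricts near $X_t$ to the orbit $O_j$, the vanishing $C_i\cdot D_j=0$ established above reads $\sum_{i'\in O_j}a_{i,i'}=0$ for every $i$ and every orbit $O_j$. For an orbit $O_j$ \emph{not} containing the index $i$ all the summands are non-negative, so each one vanishes; that is, a leaf in one monodromy orbit can meet no component lying in any other orbit.

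I would then conclude quickly. The characteristic $1$-cycle of $X_t$ is connected, so the disjointness just established is impossible unless there is a single monodromy orbit. A single orbit means that all components of $X_t$ have equal multiplicity, and inspecting the list in Theorem~\ref{hoclassification} the types with all multiplicities equal are precisely the smooth elliptic curve, the cyclic and chain types $I_k$ and $A_{\infty}$, and the additive types $II$, $III$, $IV$; the remaining Kodaira types $I_k^*,II^*,III^*,IV^*$ all carry components of two or more distinct multiplicities and are therefore excluded. These three surviving families are exactly cases (1), (2), and (3) of the proposition.

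The step I expect to be the main obstacle is the local intersection computation: making rigorous the identification of $C_i\cdot D_j$ with the fibre-wise sum $\sum_{i'\in O_j}a_{i,i'}$, including the correct multiplicity bookkeeping when $\Delta$ or $X_t$ is non-reduced, and pinning down the signs of the $a_{i,i'}$. This is precisely the point at which one must feed in the detailed geometry of the general singular fibre supplied by Hwang and Oguiso, rather than formal consequences of $\rho(X)=1$ alone.
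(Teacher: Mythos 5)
Your overall skeleton agrees with the paper's endgame: monodromy orbits of components of $X_t$ correspond to irreducible components of $\pi^{-1}(\Delta_i)$, monodromy preserves multiplicities, and once all components of $X_t$ have equal multiplicity the starred types $I_k^*, II^*, III^*, IV^*$ in Theorem~\ref{hoclassification} are excluded. But your route to ``there is only one orbit'' is genuinely different from the paper's. The paper gets this step with no intersection theory on the singular fibre at all: it invokes Oguiso's Shioda--Tate formula (and, in the sectionless non-projective case, its degenerate analogue $\rho(X)=\rho(\P^n)+\sum_i(m_i-1)$), so $\rho(X)=1$ forces each $\pi^{-1}(\Delta_i)$ to be irreducible outright, and transitivity of monodromy is then automatic. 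Your substitute --- $\rho(X)=1$ gives $D\cdot C=0$ for every divisor and every curve in a fibre, followed by a Zariski-lemma-style sign argument on a transverse slice $\pi^{-1}(B)$ --- is a legitimate, more self-contained mechanism, and the bookkeeping you flag as the main obstacle is actually harmless: the sheets of $D_j|_{\pi^{-1}(B)}$ enter with some positive multiplicities $c_{i'}$, but positive coefficients do not affect the conclusion that each non-negative summand vanishes. (Incidentally, your claim that the diagonal entries $a_{i,i}$ are negative is false in general --- for an irreducible fibre $V_1\cdot C_1=0$ --- but you never use it.)

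The genuine gap is in your concluding sentence, not in the local computation. Connectedness of the characteristic $1$-cycle does not yield the contradiction: if there were two orbits $O_1,O_2$, the disjointness you proved is perfectly consistent with every characteristic $1$-cycle being connected, since the cycles through general points of $O_1$-components could simply consist entirely of leaves lying in $O_1$-components, and likewise for $O_2$. What you actually need is connectedness of the fibre $X_t$ itself (standard for Lagrangian fibrations, e.g.\ from $\pi_*\O_X\cong\O_{\P^n}$), which forces some $V_i$ with $i\in O_1$ to meet some $V_{i'}$ with $i'\in O_2$ at a point $x$; you must then produce a curve through $x$, contained in $V_i$ but \emph{not contained in} $\bigcup_{j\in O_2}V_j$, whose intersection number with the divisor corresponding to $O_2$ is strictly positive, contradicting degree zero. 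The natural candidate is the characteristic leaf through $x$, but $x$ is not a general point, so you must rule out the degenerate possibility that every leaf through $x$ is trapped inside $V_i\cap\bigcup_{j\in O_2}V_j$. That requires the finer structure theory of Hwang--Oguiso (pairwise intersections of components are {\'e}tale over the Albanese base, hence transverse to leaves) --- exactly the kind of input you anticipated needing, but at a different point of the argument than the one you identified. With fibre connectedness and that transversality supplied, your proof closes up; as written, the final step is a non sequitur.
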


\begin{remark}
In the first case, $X_t$ must have multiplicity $m=2,3,4,$ or $6$ (not $1$ or it would be a smooth fibre). In the second case, we must have $m=1$ for type $I_k$ with $k$ odd, and $m=1$ or $2$ for type $I_k$ with $k$ even and type $A_{\infty}$. In the third case, we must have $m=1$ or $5$ for type $II$, $m=1$ or $3$ for type $III$, and $m=1$ or $2$ for type $IV$ (see Hwang and Oguiso~\cite{ho11}).
\end{remark}

Before presenting the proof of Proposition~\ref{prop}, we state a generalization of the Shioda-Tate formula to higher dimensional fibrations by abelian varieties.
\begin{thm}[Oguiso~\cite{oguiso09}, Theorem~1.1]
Let $\varphi:X\rightarrow Y$ be a proper surjective morphism with rational section $O$, whose generic fibre $A:=X_{\eta}$ is an abelian variety defined over the field $K=\C(Y)$ with origin $O$. Assume further that $X$ and $Y$ have  only $\Q$-factorial rational singularities, $\varphi$ is equi-dimensional in codimension one, and $h^1(X,\O_X)=h^1(Y,\O_Y)$. Write $\Delta=\cup_{i=1}^k\Delta_i$ for the decomposition into irreducible components of the discriminant divisor $\Delta\subset Y$, and assume that $\varphi^{-1}(\Delta_i)\subset X$ consists of $m_i$ irreducible components. Then the Mordell-Weil group $MW(\varphi)$, i.e., the group $A(K)$ of $K$-rational points of $A$ (equivalently, the group of rational sections of $\varphi$), is a finitely generated abelian group of rank
$$\mathrm{rank}MW(\varphi)=\rho(X)-\rho(Y)-\mathrm{rank}NS(A_K)-\sum_{i=1}^k(m_i-1).$$
\end{thm}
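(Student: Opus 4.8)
The plan is to prove this by a restriction-and-localisation argument generalising Shioda's original derivation of the Shioda--Tate formula, working throughout with ranks (i.e.\ tensoring with $\Q$) so as to ignore torsion. The central object is the restriction homomorphism $r:\mathrm{Pic}(X)\rightarrow\mathrm{Pic}(A_K)$ obtained by pulling back line bundles to the generic fibre $A_K=X_{\eta}$. First I would show that $r$ is surjective: any line bundle on $A_K$ is already defined over $\varphi^{-1}(U)$ for some dense open $U\subseteq Y$, and, since $X$ is normal with $\Q$-factorial singularities, it extends to a line bundle on all of $X$ after twisting by a suitable combination of the prime divisors lying over $Y\setminus U$. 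The kernel of $r$ is the group $W$ of \emph{vertical} classes, i.e.\ line bundles supported over a proper closed subset of $Y$; because $\varphi$ is equi-dimensional in codimension one, $W$ is generated modulo linear equivalence by $\varphi^*\mathrm{Pic}(Y)$ together with the irreducible components of the fibres $\varphi^{-1}(\Delta_i)$ over the discriminant. (Over a codimension-one point of $Y$ outside $\Delta$ the fibre is an irreducible abelian variety and contributes only its pullback class.)

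Next I would pass from $\mathrm{Pic}$ to $NS$ by quotienting out $\mathrm{Pic}^0$. The hypothesis $h^1(X,\O_X)=h^1(Y,\O_Y)$ is exactly what makes this work: via the Leray spectral sequence the irregularity of $X$ equals that of $Y$ plus the dimension of the $K/\C$-trace $B$ of the family $A_K$, so the equality $h^1(X,\O_X)=h^1(Y,\O_Y)$ forces $B=0$. Consequently $\varphi^*:\mathrm{Pic}^0(Y)\rightarrow\mathrm{Pic}^0(X)$ is an isogeny and $\mathrm{Pic}^0(X)\subseteq W$, so dividing the exact sequence $0\rightarrow W\rightarrow\mathrm{Pic}(X)\xrightarrow{r}\mathrm{Pic}(A_K)\rightarrow0$ by $\mathrm{Pic}^0(X)$ yields an exact sequence $0\rightarrow\overline{W}\rightarrow NS(X)\rightarrow\mathrm{Pic}(A_K)\rightarrow0$, where $\overline{W}=W/\mathrm{Pic}^0(X)$ is the vertical part of $NS(X)$.

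It then remains to compute the two outer ranks. On the one hand $\overline{W}$ is spanned by $\varphi^*NS(Y)$, of rank $\rho(Y)$ since $\varphi^*$ is injective on $NS$ for surjective $\varphi$, together with the fibre components over each $\Delta_i$; for each $i$ the $m_i$ components satisfy a single relation, namely that their multiplicity-weighted sum equals $\varphi^*\Delta_i$, so they contribute exactly $m_i-1$ further independent classes, giving $\mathrm{rank}\,\overline{W}=\rho(Y)+\sum_{i=1}^k(m_i-1)$. On the other hand, the exact sequence $0\rightarrow\mathrm{Pic}^0(A_K)\rightarrow\mathrm{Pic}(A_K)\rightarrow NS(A_K)\rightarrow0$ gives $\mathrm{rank}\,\mathrm{Pic}(A_K)=\mathrm{rank}\,NS(A_K)+\mathrm{rank}\,\mathrm{Pic}^0(A_K)$, and $\mathrm{Pic}^0(A_K)=A^{\vee}(K)$ is finitely generated by the Lang--N{\'e}ron theorem (applicable precisely because the trace $B$ vanishes), with rank equal to that of $MW(\varphi)=A(K)$ since $A$ and its dual $A^{\vee}$ are isogenous over $K$. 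Substituting both ranks into $\rho(X)=\mathrm{rank}\,\overline{W}+\mathrm{rank}\,\mathrm{Pic}(A_K)$ and rearranging produces the stated formula, and the finite generation of $MW(\varphi)$ is exactly the Lang--N{\'e}ron input.

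The main obstacle will be the first paragraph: establishing rigorously, in the presence of $\Q$-factorial (rather than smooth) singularities, that $r$ is surjective and that its kernel is generated exactly by the pullbacks and the codimension-one fibre components, with no hidden vertical contributions. This is where $\Q$-factoriality and equi-dimensionality in codimension one are essential, and where the excision sequence for Picard groups must be handled with care; once this structural input is in place, the remaining rank bookkeeping in the last two paragraphs is purely formal.
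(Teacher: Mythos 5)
This statement is quoted in the paper as Theorem~1.1 of Oguiso's paper \emph{Shioda-Tate formula for an abelian fibered variety and applications} and is given no proof there, so there is no internal argument to compare against; the relevant benchmark is Oguiso's published proof. Your proposal follows essentially that same route: restriction to the generic fibre $r:\mathrm{Pic}(X)\rightarrow\mathrm{Pic}(A_K)$, identification of the kernel with the vertical classes, passage to N{\'e}ron--Severi groups using $h^1(X,\O_X)=h^1(Y,\O_Y)$ to kill the $K/\C$-trace, and the Lang--N{\'e}ron theorem for finite generation of $A(K)$, with the rank bookkeeping giving $\rho(Y)+\sum_i(m_i-1)$ from the vertical part and $\mathrm{rank}\,NS(A_K)+\mathrm{rank}\,MW(\varphi)$ from $\mathrm{Pic}(A_K)$ (using that $A$ and $A^{\vee}$ are $K$-isogenous via a polarization). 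Two points in your sketch are genuine lemmas rather than formalities, and you flag only the first: (i) the structure of the kernel, where $\Q$-factoriality (forcing you to work with $\Q$-divisors throughout, as you note) and equi-dimensionality in codimension one are needed to rule out vertical prime divisors lying over centers of codimension at least two; and (ii) your assertion that the fibre components over each $\Delta_i$ satisfy \emph{only} the single relation that their multiplicity-weighted sum is $\varphi^*\Delta_i$ --- in the surface case this is Zariski's lemma on the semi-negativity of the intersection form on fibre components, and in higher dimensions it requires its own argument (cutting by hyperplanes or an intersection-theoretic substitute), which you should not leave implicit. With those two inputs supplied, your outline is a faithful reconstruction of the cited proof.
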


\begin{proof}{\bf of Proposition~\ref{prop}} We apply Oguiso's theorem to a Lagrangian fibration $\pi:X\rightarrow\P^n$. In this case, $X$ and $Y=\P^n$ are both smooth, Lagrangian fibrations are equi-dimensional, and $h^1(X,\O_X)=h^1(\P^n,\O_{\P^n})=0$. The theorem assumes the existence of a (rational) section. In~\cite{sawon09} the author proved that there is a hypersurface in $\mathcal{H}_F=\mathcal{H}_L$ parametrizing Lagrangian fibrations that admit sections; in other words, Lagrangian fibrations that admit sections are codimension two in $\mathcal{M}$. A very general Lagrangian fibration that admits a section will therefore have Picard number $\rho(X)=2$. In addition, the existence of a section implies that $X$ is projective and hence the N{\'e}ron-Severi group $NS(A_K)$ must have rank at least one. The Shioda-Tate formula,
$$\mathrm{rank}MW(\pi)=2-1-\mathrm{rank}NS(A_K)-\sum_{i=1}^k(m_i-1),$$
then forces $\mathrm{rank}MW(\pi)=0$, $\mathrm{rank}NS(A_K)=1$, and $m_i=1$ for all $i$. If we consider instead a Lagrangian fibration $\pi:X\rightarrow\P^n$ that does not admit any (multi-valued rational) section, then there is an obvious analogue of the formula in which $\mathrm{rank}MW(\pi)$ and $\mathrm{rank}NS(A_K)$ both vanish and
$$\rho(X)=\rho(\P^n)+\sum_{i=1}^k(m_i-1).$$
For a very general Lagrangian fibration $\rho(X)=1$ and again we find that $m_i=1$ for all $i$. Summarizing, we have proved that if the discriminant divisor $\Delta\subset\P^n$ of a very general Lagrangian fibration $\pi:X\rightarrow\P^n$ decomposes into irreducible components as $\Delta=\cup_{i=1}^k\Delta_i$, then $\pi^{-1}(\Delta_i)$ is irreducible for all $i$.

Next, let $t$ be a general point of $\Delta_i\subset\Delta$. The statement above does {\em not\/} imply that the fibre $X_t$ is irreducible. It is possible that the fibre $X_t$ has several irreducible components $Y_1,\ldots,Y_l$ that are all contained in the single irreducible divisor $\pi^{-1}(\Delta_i)$, because of `monodromy' permuting the components as we move around a loop in $\Delta_i$ starting and ending at $t$, avoiding the codimension one subset $\Delta_{i0}\subset\Delta_i$ parametrizing non-general singular fibres. However, we see that this monodromy representation
$$\pi_1(\Delta_i\backslash\Delta_{i0},t)\longrightarrow \mathrm{Sym}_l$$
must act transitively on the set of components of $X_t$. This implies that the components $Y_j$ must all have the same multiplicity. Referring to Theorem~\ref{hoclassification}, we can conclude that the (reduced) characteristic $1$-cycle of $X_t$ must belong to one of the three cases of the proposition. The other possibilities in Hwang and Oguiso's classification are singular elliptic curves of Kodaira type $I^*_m$ with $m\geq 0$, $II^*$, $III^*$, and $IV^*$, but these all have components with different multiplicities, which is not allowed.
\end{proof}

\section{Examples}

In this section we describe the singular fibres in codimension one for several examples of Lagrangian fibrations.

\subsection{Multiple fibres with smooth reduction}
\label{later}

In Example~6.2 of~\cite{ho11}, Hwang and Oguiso described several examples where the singular fibre has smooth reduction, which is equivalent to the (reduced) characteristic $1$-cycle being a smooth elliptic curve. Their examples are local Lagrangian fibrations, but the same constructions easily extend to give global compact Lagrangian fibrations. For instance, let us extend Example~6.2(v) which has singular fibres of multiplicity $2$ and characteristic $1$-cycle an arbitrary smooth elliptic curve $E_1$ with equation $y^2=x^3+ax+b$. Let $E_2$, $E_3$, and $E_4$ be arbitrary smooth elliptic curves with coordinates $t$, $z$, and $s$, respectively, and let $p_2$ be a $2$-torsion point on $E_3$. Then define $X$ to be the quotient of $E_1\times E_2\times E_3\times E_4$ by the fixed-point-free involution
$$g^*:((x,y),t,z,s)\longmapsto ((x,-y),-t,z+p_2,s).$$
The symplectic form $\sigma:=\frac{dx\wedge dt}{y}+dz\wedge ds$ on $E_1\times E_2\times E_3\times E_4$ is preserved by $g^*$ and therefore descends to $X$. The projection 
\begin{eqnarray*}
X & \longrightarrow & E_2/\pm 1\times E_4\cong{\mathbb{P}}^1\times E_4 \\
{[((x,y),t,z,s)]} & \longmapsto & (\pm t,s)
\end{eqnarray*}
makes $X$ into a Lagrangian fibration. The singular fibres sit above $q_2\times E_4$, where $q_2$ is a $2$-torsion point in $E_2$ (fixed by $\pm 1$), and they look like the hyperelliptic surface
$$E_1\times E_3/((x,y),z)\sim ((x,-y),z+p_2)$$
with multiplicity $2$.

This example is an isotrivial Lagrangian fibration: all the smooth fibres are isomorphic to $E_1\times E_3$. We can construct a non-isotrivial example by replacing $E_1\times E_2$ by a certain elliptic K3 surface $S$. Specifically, choose for $S$ an elliptic K3 surface admitting a symplectic involution $\tau$ which acts as $\pm 1$ on each fibre and as $\pm 1$ on the base $\P^1=\C\cup\{\infty\}$. The only fibres of $S\rightarrow\P^1$ that are fixed by $\tau$ are those above $0$ and $\infty$; we assume these are smooth.
We can now modify the example above by defining $X$ to be the quotient of $S\times E_3\times E_4$ by the fixed-point-free involution
$$g^*:(q,z,s)\longmapsto (\tau(q),z+p_2,s).$$
The symplectic form $\sigma:=\sigma_S+dz\wedge ds$ descends to $X$ and the projection to $\P^1/\pm 1\times E_4$ makes $X$ into a Lagrangian fibration. The singular fibres sit above $\{0\}\times E_4$ and $\{\infty\}\times E_4$ and look like the hyperelliptic surfaces
$$S_0\times E_3/(w_0,z)\sim(-w_0,z+p_2)\qquad\mbox{and}\qquad S_{\infty}\times E_3/(w_{\infty},z)\sim(-w_{\infty},z+p_2)$$
with multiplicity $2$, where $S_0$ and $S_{\infty}$ are the elliptic fibres of $S\rightarrow\P^1$ above $0$ and $\infty$, respectively, with coordinates $w_0$ and $w_{\infty}$.

Note that a theorem of Hwang~\cite{hwang08} asserts that the base of a Lagrangian fibration of an irreducible holomorphic symplectic manifold must be isomorphic to projective space, if it is smooth. However, in the examples above $X$ is not an {\em irreducible\/} holomorphic symplectic manifold; indeed, it admits an {\'e}tale cover that is a product, $E_1\times E_2\times E_3\times E_4$ and $S\times E_3\times E_4$, respectively. Consequently, bases that are not isomorphic to $\P^2$ can arise.

\subsection{Beauville-Mukai systems}
\label{later2}

Recall the construction of the Beauville-Mukai integrable system~\cite{beauville99, mukai84}. Let $S$ be a K3 surface containing a smooth genus $n$ curve $C$. Then $C$ moves in an $n$-dimensional linear system, $|C|\cong\P^n$; denote by $\mathcal{C}\rightarrow\P^n$ the corresponding family of curves. If $S$ is very general in the sense that its N{\'e}ron-Severi group $NS(S)$ is generated over $\mathbb{Z}$ by $[C]$ then every curve in the family $\mathcal{C}$ is reduced and irreducible. This means that we can apply the Altman and Kleiman construction~\cite{ak80} of the compactified relative Jacobian to obtain $X:=\overline{\mathrm{Jac}}^d(\mathcal{C}/\P^n)$. We can also regard $X$ as a Mukai moduli space of stable sheaves on $S$ with Mukai vector
$$v=(0,[C],d+1-n).$$
Here we think of an element of $X$ as a torsion sheaf $\iota_*L$, where $\iota:C\hookrightarrow S$ is the embedding of a curve into the K3 surface and $L$ is a degree $d$ line bundle (or more generally, a rank one torsion-free sheaf) on $C$; then $\iota_*L$ is stable in the sense of Simpson~\cite{simpson94}. This latter point of view shows that $X$ admits a holomorphic symplectic structure, and $X\rightarrow\P^n$ is therefore a Lagrangian fibration. Moreover, the definition of $X$ as a Mukai moduli space makes sense even when $NS(S)\not\cong\mathbb{Z}.[C]$, and provided the Mukai vector $v$ is isotropic and a general polarization of $S$ is chosen, we will once again obtain a (smooth, compact) Lagrangian fibration $X\rightarrow\P^n$.

For a very general (polarized) K3 surface, a general codimension one singular curve in the family $\mathcal{C}$ will have a single node. This can be proved by studying the Beauville-Mukai system and using properties of Lagrangian fibrations (see Lemma~2.4 of~\cite{sawon15}); it is also a special case of Lemma~3.1 of Chen~\cite{chen99}. Let $C$ be such a curve, with normalization $\tilde{C}$ of genus $n-1$, and $C$ obtained by identifying two points $p$ and $q\in\tilde{C}$. The normalization of the compactified Jacobian $\overline{\mathrm{Jac}}^dC$ of $C$ is a $\P^1$-bundle over the Jacobian $\mathrm{Jac}^d\tilde{C}$ of $\tilde{C}$, and the compactified Jacobian $\overline{\mathrm{Jac}}^dC$ itself is obtained by identifying the $0$- and $\infty$-sections of this $\P^1$-bundle via a translation
$$0\mbox{-section}\cong\mathrm{Jac}^d\tilde{C}\stackrel{\otimes\O(p-q)}{\longrightarrow}\mathrm{Jac}^d\tilde{C}\cong \infty\mbox{-section}.$$
Since the characteristic leaves are the images of the $\P^1$-fibres, the type of the characteristic $1$-cycle therefore depends on whether or not $\O(p-q)$ is a torsion line bundle in $\mathrm{Jac}^0\tilde{C}$.

\begin{lemma}
The characteristic $1$-cycle of a general singular fibre of a Beauville-Mukai system constructed from a very general K3 surface is of type $A_{\infty}$.
\end{lemma}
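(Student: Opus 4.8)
The plan is to reduce everything to a statement about torsion line bundles and then to a monodromy argument. The discussion immediately preceding the lemma shows that the normalization of $\overline{\mathrm{Jac}}^dC$ is the $\P^1$-bundle over $\mathrm{Jac}^d\tilde{C}$ whose $0$- and $\infty$-sections are glued by translation by $\mathcal{O}(p-q)$; tracing a characteristic leaf from fibre to fibre, the chain of leaves through a general point is the orbit of $\mathrm{Jac}^d\tilde{C}$ under repeated translation by $\mathcal{O}(p-q)$. Hence the characteristic $1$-cycle is of type $I_k$ exactly when $\mathcal{O}(p-q)$ has finite order $k$ in $\mathrm{Jac}^0\tilde{C}$, and of type $A_{\infty}$ exactly when $\mathcal{O}(p-q)$ is non-torsion. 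So the whole content is to prove that $\mathcal{O}(p-q)$ is non-torsion for a general singular fibre. To organize this, I would let $\Delta^{\circ}\subset\Delta$ be the dense open locus where the curve $C_t$ has a single node, pass to the {\'e}tale double cover of $\Delta^{\circ}$ that separates the two branches of the node so that $t\mapsto p_t$ and $t\mapsto q_t$ become genuine sections of the family of normalizations $\tilde{\mathcal{C}}\rightarrow\Delta^{\circ}$, and consider the resulting section $\sigma\colon t\mapsto\mathcal{O}(p_t-q_t)$ of the relative Jacobian $\mathrm{Jac}^0(\tilde{\mathcal{C}}/\Delta^{\circ})$.

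Next I would reduce ``non-torsion for general $t$'' to ``$\sigma$ is not a torsion section.'' For each $k$ the locus $Z_k=\{t:\sigma(t)\in\mathrm{Jac}^0[k]\}$ is the preimage of the finite {\'e}tale $k$-torsion group scheme and is therefore closed; if $\sigma$ is not generically $k$-torsion for any fixed $k$, then no $Z_k$ fills an irreducible component of $\Delta^{\circ}$, so $\bigcup_k Z_k$ is a countable union of proper closed subsets and its complement (the very general point, and in particular the general point in the sense intended here) gives a non-torsion value. Thus it suffices to rule out that $\sigma$ is generically $k$-torsion for some $k$. Suppose it were. Since a section landing in the finite {\'e}tale cover $\mathrm{Jac}^0[k]\rightarrow\Delta^{\circ}$ over an irreducible base must pick out a single sheet, $\sigma(t)$ would be a monodromy-invariant $k$-torsion class in $H_1(\tilde{C}_t,\mathbb{Z}/k)$ for every $t$. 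I would then invoke big monodromy for the family $\tilde{\mathcal{C}}\rightarrow\Delta^{\circ}$: because $S$ is very general with $NS(S)=\mathbb{Z}\cdot[C]$, the family of curves in $|C|$ has maximal variation and the monodromy acting on the first homology of the fibres has no non-zero invariants mod $k$. This forces $\sigma\equiv 0$, i.e.\ $p_t\sim q_t$ on $\tilde{C}_t$, which is impossible since $p_t\neq q_t$ on a curve of genus $n-1\geq 1$. Hence $\sigma$ is not a torsion section, and the conclusion follows.

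The step I expect to be the main obstacle is establishing that the monodromy of the \emph{normalized nodal} family over $\Delta^{\circ}$ is large enough to have no invariant $k$-torsion, since passing to the discriminant locus and normalizing could a priori shrink the monodromy group coming from the full linear system $\P^n=|C|$. If a clean monodromy statement proves awkward, the alternative I would pursue is an infinitesimal computation: a torsion section is horizontal for the Gauss--Manin connection, so it would be enough to exhibit one point of $\Delta^{\circ}$ at which $d\sigma\neq 0$. By the infinitesimal description of the Abel--Jacobi map, the velocity of $\sigma$ is the functional on $H^0(\tilde{C}_t,\Omega^1)$ recording how $p_t$ and $q_t$ move as the node $x_t\in S$ sweeps out $\Delta^{\circ}$; moving the node so that the two branch points are displaced unequally makes this functional non-zero, with very-generality of $S$ ruling out any accidental cancellation. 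Either route reduces the lemma to the single fact that $\mathcal{O}(p-q)$ genuinely varies, whence it is non-torsion generically and the characteristic $1$-cycle is of type $A_{\infty}$.
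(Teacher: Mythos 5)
Your reduction of the lemma to the non-torsionness of $\mathcal{O}(p-q)$, and the further reduction to showing that the section $\sigma\colon t\mapsto\mathcal{O}(p_t-q_t)$ of the relative Jacobian is not generically $k$-torsion for any fixed $k$, are correct and match the framework the paper sets up in the discussion preceding the lemma. The problem is the step you yourself flag as the main obstacle: the ``big monodromy'' assertion that the family of normalizations $\tilde{\mathcal{C}}\rightarrow\Delta^{\circ}$ admits no non-zero monodromy-invariant class in $H_1(\tilde{C}_t,\mathbb{Z}/k)$. That assertion is where the entire content of the lemma lies, and you neither prove it nor cite a result that gives it. It is not a standard citation: classical big-monodromy theorems concern the action on the homology of the \emph{smooth} members of a linear system (or on the vanishing cohomology of a Lefschetz pencil), whereas here the base is the discriminant hypersurface itself --- a Severi variety of uninodal curves --- and the fibres are the \emph{normalizations} of the nodal members, a different family whose monodromy could a priori be much smaller, exactly as you worry. (There is also the minor wrinkle that $\sigma$ is only defined up to sign on $\Delta^{\circ}$ itself, so invariance must be formulated on your double cover.) Your fallback route has the same status: ``moving the node so that the two branch points are displaced unequally'' and ``very-generality ruling out accidental cancellation'' are not computations, and making them precise requires controlling how $p_t$ and $q_t$ vary with the curve, which is essentially the original difficulty. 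So the proposal is a correct skeleton with the load-bearing step missing.

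For comparison, the paper avoids monodromy altogether by degenerating to cuspidal curves. By Galati and Knutsen (Theorem~1.1 of~\cite{gk14}; for $n=2$, the $72$ cusps of $\Delta$), the locus $\Delta_c\subset\Delta$ of cuspidal curves is non-empty of codimension one in $\Delta$, and since cuspidal curves also have normalizations of genus $n-1$, both the family $\tilde{\mathcal{C}}$ and the section $\sigma$ extend across $\Delta_c$; the extended section meets the zero section exactly along $\Delta_c$, because $\mathcal{O}(p-q)=\mathcal{O}$ forces $p=q$, i.e., a cusp. Now a section that is $k$-torsion over an irreducible base takes values in the $k$-torsion subscheme, which is {\'e}tale over the base, so if it meets the zero section anywhere it is identically zero; since $\sigma$ vanishes on the non-empty $\Delta_c$ but is non-zero off it, it cannot be torsion. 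This supplies exactly the non-degeneracy your monodromy claim was meant to provide, with the geometric input being a cited existence theorem for cuspidal curves rather than an unproven monodromy statement. To salvage your approach you would need either to prove the monodromy statement for this normalized nodal family or to replace it with an input of this degeneration type.
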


\begin{remark}
A local example of a Lagrangian fibration with general fibres of type $A_{\infty}$ was described by Matsushita (see Proposition~4.13 in~\cite{ho09}).
\end{remark}

\begin{proof}
Consider first the genus $n=2$ case, studied in detail in~\cite{sawon08ii}. We can write the discriminant locus as a disjoint union
$$\Delta=(\Delta\backslash\Delta_0)\cup \Delta_0=(\Delta\backslash\Delta_0)\cup\Delta_c\cup\Delta_{nn},$$
where $\Delta\backslash\Delta_0$ parametrizes curves with one node, $\Delta_c$ parametrizes curves with one cusp, and $\Delta_{nn}$ parametrizes curves with two nodes. Note that $\Delta_c$ is the set of 72 cusps of $\Delta$ and $\Delta_{nn}$ is the set of 324 nodes of $\Delta$. Consider the normalization $\tilde{\mathcal{C}}\rightarrow (\Delta\backslash\Delta_0)$ of the family of nodal curves $\mathcal{C}\rightarrow (\Delta\backslash\Delta_0)$. Because the cuspidal curves over $\Delta_c$ also have normalizations of genus one, this family extends to a family $\tilde{\mathcal{C}}\rightarrow (\Delta\backslash\Delta_0)\cup\Delta_c$ of smooth genus one curves. The family of line bundles $\O(p-q)$ then defines a section of the relative Jacobian
$$\mathrm{Jac}^0(\tilde{\mathcal{C}}/(\Delta\backslash\Delta_0))$$
over $\Delta\backslash\Delta_0$. As we approach a cuspidal curve, $p\rightarrow q$, so this section extends to a section of the relative Jacobian
$$\mathrm{Jac}^0(\tilde{\mathcal{C}}/(\Delta\backslash\Delta_0)\cup\Delta_c).$$
Moreover, this section intersects the zero section, i.e., takes the value $\O$, precisely above $\Delta_c$, because $\O(p-q)=\O(p-p)=\O$ only for a cuspidal curve. In particular, the section is non-zero and not constant. It follows that for a general curve in $\Delta\backslash\Delta_0$, with a single node, $\O(p-q)$ will not be torsion in $\mathrm{Jac}^0\tilde{C}$, and therefore the compactified Jacobian $\overline{\mathrm{Jac}}^dC$ will be of type $A_{\infty}$, i.e., the characteristic $1$-cycle will consist of an infinite chain of rational curves.

The same argument works for higher genus curves. For a very general K3 surface $S$, Galati and Knutsen proved that in the $n$-dimensional linear system $|C|\cong\P^n$ there will exist a curve with a cusp and $n-2$ nodes (Theorem~1.1 in~\cite{gk14}). Moreover, they showed that the $n-2$ nodes may be smoothed independently, producing an $(n-2)$-dimensional family of curves with single cusps. In other words, $\Delta\backslash\Delta_0$ parametrizes curves with one node, and there again exists a non-empty codimension one subset $\Delta_c\subset\Delta$ (i.e., codimension two in $|C|$) parametrizing cuspidal curves. The proof for $n>2$ then proceeds as above.
\end{proof}

\begin{remark}
The section of $\mathrm{Jac}^0(\tilde{\mathcal{C}}/(\Delta\backslash\Delta_0)\cup\Delta_c)$ in the above proof will also intersect the (multi-)section of $k$-torsion points, for any $k\geq 2$. At these points of intersection $\O(p-q)$ will be $k$-torsion and therefore the compactified Jacobian $\overline{\mathrm{Jac}}^dC$ will be of type $I_k$, i.e., the characteristic $1$-cycle will consist of $k$ rational curves forming a cycle. Thus the Beauville-Mukai system contains singular fibres corresponding to characteristic $1$-cycles of all types $I_k$ for $k\geq 2$ and $A_{\infty}$. There will be no singular fibres corresponding to characteristic $1$-cycles of type $I_1$, as these would require $p=q$, which as we saw only occurs for cuspidal curves over $\Delta_c$.
\end{remark}

\subsection{Hilbert schemes of elliptic K3 surfaces}

Next we consider an elliptic K3 surface $S\rightarrow\P^1$. Assume $S\rightarrow\P^1$ admits a section, but is otherwise very general in the sense that it contains exactly 24 nodal rational curves as singular fibres, above the points $p_1,\ldots,p_{24}\in\P^1$. The Hilbert scheme $\mathrm{Hilb}^nS$ of $n$ points on $S$ is an irreducible holomorphic symplectic manifold~\cite{beauville83}, and the elliptic fibration on $S$ induces a Lagrangian fibration
$$\mathrm{Hilb}^nS\longrightarrow\mathrm{Sym}^nS\longrightarrow\mathrm{Sym}^n\P^1=\P^n,$$
where the first map is the Hilbert-Chow morphism.

The fibre over a general point of $\mathrm{Sym}^n\P^1$, given by an $n$-tuple $\{x_1,\ldots,x_n\}$ of {\em distinct\/} points on $\P^1$, is isomorphic to the product $S_{x_1}\times\cdots\times S_{x_n}$ of the corresponding elliptic fibres of $S\rightarrow\P^1$. The singular fibres occur over the hyperplanes
$$\Delta_i:=\{\{x_1,\ldots,x_n\}\in\mathrm{Sym}^n\P^1\; |\;x_j=p_i\mbox{ for some }j\}$$
for $i=1,\ldots,24$, and over the `big diagonal'
$$\Delta_0:=\{\{x_1,\ldots,x_n\}\in\mathrm{Sym}^n\P^1\; |\;x_j=x_k\mbox{ for some }j\mbox{ and }k\}.$$
Consider the former; moreover, without loss of generality, let $\{x_1,\ldots,x_n\}$ be a general point of $\Delta_1$ with $x_1=p_1$, $x_j\neq p_i$ for all $j\geq 2$ and $i$, and $x_j\neq x_k$ for all $j$ and $k$. Then the singular fibre over $\{x_1,\ldots,x_n\}$ will be isomorphic to
$$S_{x_1}\times S_{x_2}\times\cdots\times S_{x_n}\cong S_{p_1}\times S_{x_2}\times\cdots\times S_{x_n} $$
where $S_{p_1}$ is a nodal rational curve and $S_{x_2},\ldots,S_{x_n}$ are smooth elliptic curves. This is semistable and the characteristic $1$-cycle is clearly of type $I_1$.

Next consider a general point $\{x_1,\ldots,x_n\}$ of $\Delta_0$; without loss of generality, assume $x_1=x_2$, $x_j\neq x_k$ otherwise, and $x_j\neq p_i$ for all $j$ and $i$. For simplicity, we first consider the $n=2$ case. Write $E$ for the elliptic curve $S_{x_1}$. The Hilbert scheme $\mathrm{Hilb}^2S$ is obtained from $\mathrm{Sym}^2S$ by blowing up the diagonal; thus each point of the diagonal in $\mathrm{Sym}^2S$ is replaced by a $\P^1$. The singular fibre over $\{x_1=x_2\}\in\Delta_0$ is therefore isomorphic to the union of $\mathrm{Sym}^2E$ and a $\P^1$-bundle over $D$, where $D\cong E$ is the diagonal in $\mathrm{Sym}^2E$. Now $\mathrm{Sym}^2E$ is also a $\P^1$-bundle over $E$, because we have the Abel-Jacobi map
$$\mathrm{Sym}^2E\longrightarrow\mathrm{Pic}^2E\cong E$$
taking a degree two divisor to its corresponding line bundle. The diagonal $D$ is a $4$-valued section of this $\P^1$-bundle; for instance, it intersects the fibre above $\O(2y_0)\in\mathrm{Pic}^2E$ in the four points $2y_1\in D\subset\mathrm{Sym}^2E$ where $y_1-y_0$ is $2$-torsion in $E$. Instead, one could observe that the composition of the maps
$$E\cong D\hookrightarrow \mathrm{Sym}^2E\longrightarrow\mathrm{Pic}^2E$$
takes $y$ to $\mathcal{O}(2y)$, i.e., is multiplication by $2$. Putting everything together, we see that the singular fibre over $\{x_1=x_2\}$ will have characteristic $1$-cycle of type $I^*_0$, i.e., a central $\P^1$ with four other $\P^1$s attached in a $\tilde{D}_4$ configuration. 

For $n>2$ we simply take the product of the above with the smooth $(n-2)$-dimensional abelian variety $S_{x_3}\times\cdots\times S_{x_n}$, so the characteristic $1$-cycle is still of type $I^*_0$. In particular, the singular fibres over $\Delta_0$ are never semistable. However, we will see in the next subsection that they become semistable after a small deformation of the Lagrangian fibration.

Thinking of $\P^n$ as parametrizing degree $n$ polynomials, $\Delta_0$ is precisely the hypersurface parametrizing polynomials with a repeated root. A resultant argument then show that $\Delta_0$ has degree $2(n-1)$, but it seems that one should attach a multiplicity to $\Delta_0$ because it parametrizes non-semistable singular fibres. We will say more about this in the next subsection.

\subsection{Deforming to a very general Lagrangian fibration}

We will now show that the Hilbert scheme of an elliptic K3 surface can be deformed, as a Lagrangian fibration, to a Beauville-Mukai integrable system. Under this deformation, the non-semistable singular fibres of the Hilbert scheme become semistable fibres of the Beauville-Mukai system. We first describe this in dimension $2n=4$, and then in higher dimensions $2n\geq 6$.

Let $S$ be an elliptic K3 surface admitting a section $D$, and assume that the N{\'e}ron-Severi group is generated by $D$ and the fibre $F$. In what follows we will often abbreviate $\H^0(S,\O(D))$ as $\H^0(S,D)$, etc.

\begin{lemma}
There is a natural isomorphism
$$\H^0(S,2F+D)\cong\H^0(S,2F)\cong\C^3.$$
Thus $D$ is the base locus of the linear system $|2F+D|$, the movable part consists of pairs of fibres, and
$$|2F|=\mathrm{Sym}^2\P^1\cong\P^2$$
because each fibre moves in a pencil.
\end{lemma}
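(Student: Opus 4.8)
The plan is to compute both cohomology groups by pushing forward along $\pi$, using that $NS(S)=\langle D,F\rangle$ with intersection numbers $F^2=0$, $D\cdot F=1$, and $D^2=-2$ (the last by adjunction, since $D\cong\P^1$ is a section of the K3). First I would record $\H^0(S,2F)$: as $2F=\pi^*\O_{\P^1}(2)$ and $\pi_*\O_S=\O_{\P^1}$ for the connected elliptic fibres, the projection formula gives $\pi_*\O_S(2F)\cong\O_{\P^1}(2)$ and hence $\H^0(S,2F)\cong\H^0(\P^1,\O_{\P^1}(2))\cong\C^3$. Its members are the pullbacks of degree-two divisors on $\P^1$, i.e. unordered pairs of fibres, so $|2F|$ is the space of binary quadratics up to scale, namely $\P^2\cong\mathrm{Sym}^2\P^1$, with each fibre moving in the pencil $|F|\cong\P^1$.

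The key step is to show $\pi_*\O_S(D)\cong\O_{\P^1}$. This sheaf is a line bundle, since on each fibre $F_p$ the restriction of $\O_S(D)$ is $\O_{F_p}(D\cap F_p)$, a degree-one bundle on an elliptic curve with $h^0=1$. Moreover the tautological section $s_D\in\H^0(S,\O_S(D))$ cutting out $D$ restricts on each fibre to a nonzero section of $\O_{F_p}(D\cap F_p)$, because $D$ is a section meeting $F_p$ transversally in a single point; hence $\pi_*s_D$ is a nowhere-vanishing section of $\pi_*\O_S(D)$, forcing $\pi_*\O_S(D)\cong\O_{\P^1}$. By the projection formula $\pi_*\O_S(2F+D)\cong\O_{\P^1}(2)\otimes\pi_*\O_S(D)\cong\O_{\P^1}(2)$, so that $\H^0(S,2F+D)\cong\H^0(\P^1,\O_{\P^1}(2))\cong\C^3$ as well.

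To see that the identification is the \emph{natural} one and to read off the base locus, I would push forward the multiplication-by-$s_D$ inclusion $\O_S(2F)\hookrightarrow\O_S(2F+D)$. Under the isomorphisms above it becomes the map $\O_{\P^1}(2)\to\O_{\P^1}(2)$ given by tensoring with the trivializing section $\pi_*s_D$ of $\pi_*\O_S(D)\cong\O_{\P^1}$, hence an isomorphism on global sections. Thus every section of $2F+D$ is $s_D$ times a section of $2F$, so all members of $|2F+D|$ contain $D$; since the movable part $|2F|$ is base-point-free (pulled back from $\P^1$), the base locus is exactly $D$ and the movable part consists of pairs of fibres, as claimed.

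The step I expect to be the main obstacle is pinning down $\pi_*\O_S(D)$: it is tempting but insufficient to argue by listing effective divisors in $|2F+D|$, since one must rule out a stray irreducible curve of class $2F+D$ disjoint from $D$ (which would have arithmetic genus two), and that exclusion is exactly what the triviality of $\pi_*\O_S(D)$ encodes. The nowhere-vanishing-section argument, which uses crucially that $D$ is a genuine section rather than a multisection, is the clean way around this. As a numerical check, Riemann-Roch on the K3 gives $\chi(\O_S(2F+D))=2+\tfrac12(2F+D)^2=3$ with $h^2=h^0(-2F-D)=0$, so $h^0(2F+D)=3$ is equivalent to the vanishing $h^1(2F+D)=0$ that the computation furnishes.
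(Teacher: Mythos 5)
Your proof is correct, but it takes a genuinely different route from the paper's. The paper restricts along $D$: from the exact sequence $0\to\O(2F)\to\O(2F+D)\to\O_D\to 0$ it obtains a four-term cohomology sequence, kills $\H^1(S,2F+D)$ by vanishing (note $2F+D$ is only nef and big, since $(2F+D).D=0$, so this is really Kawamata--Viehweg rather than Kodaira), and computes $\H^1(S,2F)\cong\C$ via the Leray spectral sequence and $R^1\pi_*\O_S\cong\O_{\P^1}(-2)$; a dimension chase then forces the inclusion $\H^0(S,2F)\hookrightarrow\H^0(S,2F+D)$ to be an isomorphism. You instead stay at the level of direct images: triviality of $\pi_*\O_S(D)$, proved via the pushed-forward canonical section, plus the projection formula, gives $\pi_*\O_S(2F+D)\cong\O_{\P^1}(2)$ at once. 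Your route is more elementary (no vanishing theorem, no knowledge of $R^1\pi_*\O_S$ needed), it exhibits the isomorphism concretely as multiplication by $s_D$ --- which makes the base-locus assertion immediate rather than a corollary --- and it generalizes verbatim to the paper's later lemma on $|nF+D|$ for $n\ge 3$, where the paper must redo the whole computation with $\O_D(n-2)$ in place of $\O_D$. What the paper's method buys is an explanation of where the potential extra sections go: the $\C\cong\H^0(D,\O_D)$ is exactly absorbed by $\H^1(S,2F)\cong\C$. One small point to patch in your write-up: the fibres of $\pi$ are not all smooth elliptic curves (there are nodal fibres, and since $NS(S)$ is generated by $D$ and $F$ every fibre is irreducible of arithmetic genus one), so your constancy claim $h^0\bigl(F_p,\O_S(D)|_{F_p}\bigr)=1$ should be verified on those as well; it does hold, because on an irreducible Gorenstein curve of arithmetic genus one the dualizing sheaf is trivial, so a degree-one line bundle $L$ has $h^1(L)=h^0(L^{-1})=0$ and $h^0(L)=\chi(L)=1$. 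With that noted, Grauert's theorem (or the injectivity of the base-change map into $\H^0$ of the fibre) makes your nowhere-vanishing argument airtight.
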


\begin{proof}
Firstly
$$(2F+D).F=1,\qquad (2F+D).D=0,\qquad\mbox{and}\qquad (2F+D)^2=2,$$
so $2F+D$ is nef and big. By the Kodaira vanishing theorem and the fact that $S$ has trivial canonical bundle, we deduce that $\H^1(S,2F+D)=0$. The short exact sequence
$$0\longrightarrow \O(2F)\longrightarrow \O(2F+D)\longrightarrow \O(2F+D)|_D\cong\O_D\longrightarrow 0$$
then yields the long exact sequence
$$0\rightarrow\H^0(S,2F)\rightarrow\H^0(S,2F+D)\rightarrow\H^0(D,\O_D)\cong\C\rightarrow\H^1(S,2F)\rightarrow 0.$$
The Leray spectral sequence for $\pi:S\rightarrow\P^1$ yields the exact sequence
$$\H^1(\P^1,\O(2))=0\longrightarrow \H^1(S,2F)=\H^1(S,\pi^*\O(2))\longrightarrow \H^0(\P^1,R^1\pi_*\pi^*\O(2))\longrightarrow 0,$$
which implies
$$\H^1(S,2F)\cong \H^0(\P^1,R^1\pi_*\pi^*\O(2))= \H^0(\P^1,\O(2)\otimes R^1\pi_*\O_S)=\H^0(\P^1,\O)\cong\C,$$
because $R^1\pi_*\O_S\cong\O(-2)$ for an elliptic K3 surface. The long exact sequence above now looks like
$$0\longrightarrow\H^0(S,2F)\longrightarrow\H^0(S,2F+D)\longrightarrow\C\stackrel{\cong}{\longrightarrow}\C\longrightarrow 0.$$
We conclude that the first map is also an isomorphism, i.e.,
$$\H^0(S,2F+D)\cong\H^0(S,2F).$$
Of course, the Leray spectral sequence also yields
$$\H^0(S,2F)=\H^0(S,\pi^*\O(2))\cong\H^0(\P^1,\O(2))\cong\C^3.$$
\end{proof}

The first order deformations of $S$ are parametrized by
$$\H^1(S,T)\cong \H^1(S,\Omega^1)\cong \H^{1,1}(S),$$
where the first isomorphism is induced by interior product with the holomorphic symplectic form $\sigma$, which gives an isomorphism $T\cong\Omega^1$. In terms of periods, deforming in the direction $\alpha\in\H^{1,1}(S)$ changes $\sigma$ into
$$\sigma_t=\sigma+t\alpha$$
to first order in $t$. We can globalize this by adding a quadratic term and defining
$$\sigma_t:=\sigma+t\alpha-t^2\left(\frac{\alpha^2}{2\sigma\bar{\sigma}}\right)\bar{\sigma}.$$
Then $\sigma_t^2=0$ and
$$\sigma_t\bar{\sigma}_t=\left(1+\frac{|t|^2\alpha^2}{2\sigma\bar{\sigma}}\right)^2\sigma\bar{\sigma}$$
is positive for all $t$ sufficiently close to $0$, thus giving a family of K3 surfaces $\mathcal{S}\rightarrow\Delta$ defined over a disc $\Delta$ of some radius.

Consider the deformation in the direction $\alpha=D$, and assume that $\sigma$ is normalized so that $\sigma\bar{\sigma}=1$. Then $\sigma_t=\sigma+tD+t^2\bar{\sigma}$, $\sigma_t\bar{\sigma}_t=(1-|t|^2)\sigma\bar{\sigma}$, and thus our family $\mathcal{S}\rightarrow\Delta$ is defined over a disc of radius $1$. Now
$$\sigma_t.(2F+D)=\sigma.(2F+D)+tD.(2F+D)+t^2\bar{\sigma}.(2F+D)=0,$$
so $2F+D$ remains of type $(1,1)$ on $S_t$ for all $t$. Moreover, for very general $t$ the N{\'e}ron-Severi group of $S_t$ is generated by $2F+D$, and thus $2F+D$ will be ample on $S_t$. The linear system $|2F+D|$ gives a relative map
$$\mathcal{S}/\Delta\longrightarrow\P^2_{\Delta}$$
over $\Delta$. For very general $t$ this map is a branched double cover of $\P^2$, while for $t=0$ the image of $S=S_0\rightarrow\P^2$ is a conic. The drop in dimension of the image at $t=0$ occurs because the linear system $|2F+D|$ has base locus $D$ on $S=S_0$, whereas for $t\neq 0$ the curve $D$ disappears, i.e., is no longer of type $(1,1)$.

Next we define a relative moduli space $\mathcal{M}/{\Delta}$ of sheaves on $\mathcal{S}/\Delta$ by defining $M_t$ to be the Mukai moduli space of stable sheaves on $S_t$ with Mukai vector
$$v=(0,[2F+D],-1).$$
A general element of $M_t$ will look like $\iota_*L$, where $\iota:C\hookrightarrow S_t$ is the inclusion of a curve $C$ in the linear system $|2F+D|$ and $L$ is a degree zero line bundle on $C$. (In general, the Mukai vector of $\iota_*L$ is $(0,[C],d+1-g)$ for a degree $d$ line bundle $L$ on a curve $C$ of genus $g$.)

\begin{proposition}
The relative moduli space $\mathcal{M}/\Delta$ is a family of Lagrangian fibrations over $\P^2_{\Delta}$.  Specifically
\begin{enumerate}
\item for very general $t$, $M_t$ is a Beauville-Mukai integrable system,
\item for $t=0$, $M_0$ is birational to $\mathrm{Hilb}^2S_0$ and the Lagrangian fibration is induced by the original elliptic fibration on $S_0=S$.
\end{enumerate}
\end{proposition}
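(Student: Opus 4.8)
The plan is to realize the whole family $\mathcal{M}/\Delta$ at once through the \emph{support morphism} and then analyze its two ends separately. For each $t$, Mukai's theory shows that $M_t$, the moduli space of stable sheaves with primitive Mukai vector $v=(0,[2F+D],-1)$ on $S_t$, is a smooth holomorphic symplectic fourfold: its dimension is $\langle v,v\rangle+2=(2F+D)^2+2=4$, matching $2n=4$ with $n=2$. Sending a sheaf $\iota_*L$ to its support, a curve $C\in|2F+D|\cong\P^2$, defines a morphism $M_t\to\P^2$; since every member of $|2F+D|$ has arithmetic genus $2$ by adjunction ($2g-2=(2F+D)^2=2$), the fibres are compactified Jacobians, hence Lagrangian, of dimension $2$. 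Because $2F+D$ stays of type $(1,1)$ across the family (established above), this construction globalizes to a relative Lagrangian fibration $\mathcal{M}/\Delta\to\P^2_{\Delta}$. This gives the first assertion, and it remains to identify the two ends.

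For very general $t$, the constraint that $\mathrm{NS}(S_t)$ is generated by $2F+D$ makes this class ample and primitive, so every curve in $|2F+D|$ is reduced and irreducible: any effective splitting $C=A+B$ would give $[A]=a(2F+D)$, $[B]=b(2F+D)$ with $a,b\in\Z_{>0}$ and $a+b=1$, which is impossible. The Altman--Kleiman construction then applies verbatim, identifying $M_t$ with the compactified relative Jacobian $\overline{\mathrm{Jac}}^0(\mathcal{C}/\P^2)$ of this family of integral genus-$2$ curves, which is precisely a Beauville--Mukai integrable system. This is part (1).

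For $t=0$ I would invoke the preceding lemma: on $S_0=S$ the section $D$ is the base locus of $|2F+D|$, and the movable part is $|2F|=\mathrm{Sym}^2\P^1\cong\P^2$. Hence the base $\P^2$ parametrizes \emph{pairs of fibres}, and a general member of $|2F+D|$ is the reducible curve $C=D\cup S_{p_1}\cup S_{p_2}$, where $D\cong\P^1$ meets each smooth elliptic fibre $S_{p_i}$ in one point. The dual graph of $C$ is a path on three vertices, so its first Betti number is $0$ and there is no toric part; since $D$ is rational its Jacobian vanishes, and the generalized Jacobian collapses to
$$\mathrm{Jac}^0C\cong\mathrm{Pic}^0S_{p_1}\times\mathrm{Pic}^0S_{p_2}\cong S_{p_1}\times S_{p_2}.$$
This is exactly the general fibre of the induced fibration $\mathrm{Hilb}^2S\to\mathrm{Sym}^2S\to\mathrm{Sym}^2\P^1=\P^2$, and the support map on $M_0$ factors through the pair $\{p_1,p_2\}$, i.e.\ through the original elliptic fibration. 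Matching general fibres over the common base $\P^2$, compatibly with the two Lagrangian structures, yields the birational equivalence $M_0\dashrightarrow\mathrm{Hilb}^2S$, which is part (2).

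The main obstacle is the $t=0$ analysis. There $2F+D$ is only nef and big (the base locus $D$ persists), not ample, so $M_0$ must be defined using an honest ample polarization, and this is exactly why it is \emph{birational}, rather than isomorphic, to $\mathrm{Hilb}^2S$; controlling the non-locally-free sheaves and the stability and multidegree bookkeeping on the reducible curve $C=D\cup S_{p_1}\cup S_{p_2}$ is the delicate point. One must also verify that the birational map to $\mathrm{Hilb}^2S$ is an isomorphism in codimension one away from the big diagonal $\Delta_0$ (and the singular fibres of $S$), so that the modification relating $M_0$ to $\mathrm{Hilb}^2S$ is concentrated precisely over $\Delta_0$ — which is what allows the non-semistable fibres of $\mathrm{Hilb}^2S$ over $\Delta_0$ to be replaced by semistable fibres of the Beauville--Mukai system once $t$ is deformed away from $0$.
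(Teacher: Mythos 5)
Your overall route is the same as the paper's. Part (1) is handled the same way (the paper simply says $M_t$ is a Beauville--Mukai system ``by definition''; your integrality argument for members of $|2F+D|$ when $NS(S_t)=\Z\cdot(2F+D)$ is a correct elaboration of that). For part (2) you, like the paper, identify the fibre of $M_0\to|2F+D|$ over a general point $F_1+F_2+D$ with $\mathrm{Pic}^0F_1\times\mathrm{Pic}^0F_2\cong F_1\times F_2$; your dual-graph/generalized-Jacobian computation is equivalent to the paper's explicit gluing description of line bundles $L_1,L_2,L_3$ at the two nodes, modulo the stability statement forcing multidegree $(0,0,0)$, which the paper asserts and you correctly flag as bookkeeping to be checked.

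The genuine gap is your final step: ``matching general fibres over the common base $\P^2$, compatibly with the two Lagrangian structures, yields the birational equivalence.'' Isomorphism of general fibres over a common base does \emph{not} imply two fibrations are birational: the fibrewise isomorphisms must glue over a dense open set, i.e.\ the generic fibres must be isomorphic over the function field $K=\C(\P^2)$, not merely point by point. (Compare an elliptic surface with its relative Jacobian: the fibres are isomorphic, but the surfaces are not birational unless the torsor is trivial.) Here the fibre of $M_0$ is canonically $\mathrm{Pic}^0F_1\times\mathrm{Pic}^0F_2$, while the fibre of $\mathrm{Hilb}^2S_0$ is canonically $F_1\times F_2$, and identifying the two requires a basepoint on each $F_i$ varying algebraically in the family. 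This is precisely what the paper's proof supplies and your write-up never invokes: both fibrations admit global sections --- for $\mathrm{Hilb}^2S_0$ the section $\mathrm{Sym}^2D\cong\mathrm{Sym}^2\P^1$ coming from the curve $D$, and for $M_0$ the section $C\mapsto\iota_*\O_C$ --- so the Abel--Jacobi map with basepoints $F_i\cap D$ identifies the two generic fibres as abelian surfaces over $K$, and the birational map follows. The paper compresses this into ``Since these Lagrangian fibrations both admit global sections, they must be birational,'' and makes the basepoint identification explicit in the remark that follows the proof; without the sections the inference you make would actually be false, so this is the one missing idea you would need to add.
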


\begin{remark}
The relative map $\mathcal{S}/\Delta\rightarrow\P^2_{\Delta}$ comes from maps to $|2F+D|^{\vee}$, whereas the relative Lagrangian fibration $\mathcal{M}/\Delta\rightarrow\P^2_{\Delta}$ comes from mapping a sheaf to its support in $|2F+D|$. Thus the $\P^2_{\Delta}$ in the proposition is really dual to the earlier $\P^2_{\Delta}$.
\end{remark}

\begin{proof}
For very general $t$, $M_t$ is a Beauville-Mukai system by definition; so there is nothing to prove in part 1. When $t=0$, the linear system $|2F+D|$ has $D$ as a base locus. Thus every curve $C$ in the linear system $|2F+D|$ looks like $F_1+F_2+D$, where $F_1$ and $F_2$ are a pair of fibres of the elliptic fibration $S\rightarrow\P^1$. Assume that $F_1$ and $F_2$ are smooth and distinct. A degree zero line bundle $L$ on $F_1+F_2+D$ is given by line bundles $L_1$, $L_2$, and $L_3$ on $F_1$, $F_2$, and $D$, respectively, plus isomorphisms $(L_1)_p\cong (L_3)_p$ at $p=F_1\cap D$ and $(L_2)_q\cong (L_3)_q$ at $q=F_2\cap D$. The degrees $d_1$, $d_2$, and $d_3$ of $L_1$, $L_2$, and $L_3$ must sum to zero. In fact, stability of $\iota_*L$ implies that $d_1=d_2=d_3=0$. The only degree zero line bundle on $D\cong\P^1$ is the trivial line bundle, whereas degree zero line bundles on $F_1$ and $F_2$ are parametrized by $\mathrm{Pic}^0F_1\cong F_1$ and $\mathrm{Pic}^0F_2\cong F_2$, respectively. Therefore the fibre of $M_0\rightarrow |2F+D|$ over the point $F_1+F_2+D$ is isomorphic to $F_1\times F_2$.

Similarly, the elliptic fibration $S_0\rightarrow\P^1$ induces a Lagrangian fibration
$$\mathrm{Hilb}^2S_0\longrightarrow\mathrm{Sym}^2\P^1\cong\P^2$$
whose fibre over a general point $F_1+F_2\in |2F|=\mathrm{Sym}^2\P^1\cong\P^2$ is also isomorphic to $F_1\times F_2$. Since these Lagrangian fibrations both admit global sections, they must be birational.
\end{proof}

\begin{remark}
The argument easily extends to some singular fibres. For example, suppose that $F_1$ is a nodal rational curve. Then we should allow $L_1$ to be a rank one torsion-free sheaf on $F_1$. In other words, we should replace $\mathrm{Pic}^0F_1$ by the compactified Jacobian $\overline{\mathrm{Pic}}^0F_1$. However, $\overline{\mathrm{Pic}}^0F_1$ is isomorphic to $F_1$, as required. The case $F_1=F_2$, even for $F_1$ smooth, appears to be more complicated.
\end{remark}

\begin{remark}
For each fibre, we identified $\mathrm{Pic}^0F$ with $F$. This identification is fixed by taking $\mathcal{O}_F$ to the basepoint in $F$ given by the intersection $F\cap D$ with the section $D$. Alternatively, we could work with the moduli space $M(0,[C],1)$ whose general element looks like $\iota_*L$ for a degree {\em two\/} line bundle $L$. Then stability will force $L$ to come from degree one line bundles on $F_1$ and $F_2$ and a degree zero line bundle on $D$. The fibre over $F_1+F_2+D$ would then be
$$\mathrm{Pic}^1F_1\times\mathrm{Pic}^1F_2\cong F_1\times F_2,$$
and this isomorphism is canonical: we don't even need a basepoint in each fibre.
\end{remark}

\begin{corollary}
The Hilbert scheme $\mathrm{Hilb}^2S_0$ can be deformed to a Beauville-Mukai integrable system, as a Lagrangian fibration over $\P^2$.
\end{corollary}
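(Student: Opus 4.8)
The plan is to read this off from the Proposition, the only gap being that part 2 identifies $M_0$ with $\mathrm{Hilb}^2S_0$ merely up to birational equivalence. That the birational map $M_0\dashrightarrow\mathrm{Hilb}^2S_0$ is not an isomorphism is exactly what one expects from the remark on the case $F_1=F_2$: over the diagonal $\Delta_0\subset\P^2$ the two Lagrangian fibrations differ, reflecting the fact (promised earlier) that the non-semistable type $I^*_0$ fibres of $\mathrm{Hilb}^2S_0$ become semistable only after the deformation. So I would first isolate what the Proposition supplies at no cost: the family $\mathcal{M}/\Delta$ is a deformation, \emph{through} Lagrangian fibrations over $\P^2$, of its central fibre $M_0$ to the Beauville-Mukai systems $M_t$ realized for very general $t$ near $0$.

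It then remains to connect $\mathrm{Hilb}^2S_0$ to $M_0$ by a deformation of Lagrangian fibrations. Here I would appeal to Huybrechts' theorem that birational irreducible holomorphic symplectic manifolds are deformation equivalent, and in fact that any birational map between them is an isomorphism in codimension one inducing a Hodge isometry on second cohomology. The point to verify is that this isometry carries the isotropic nef class $L_0\in\H^{1,1}(M_0)$ defining $M_0\to\P^2$ to the class on $\mathrm{Hilb}^2S_0$ that pulls back a hyperplane from $\mathrm{Sym}^2\P^1\cong\P^2$. This should be immediate, since over the complement of $\Delta_0$ the birational map is an isomorphism under which the fibres $F_1\times F_2$ of the two fibrations are identified, so the two maps to $\P^2$ agree there and hence the pulled-back hyperplane classes coincide.

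With the fibration classes matched, I would conclude as follows. Huybrechts' result lets one choose deformations of $M_0$ and of $\mathrm{Hilb}^2S_0$ over a common base, isomorphic away from the central fibre; such an isomorphism preserves the class $L_0$, and by Matsushita's theorem that deforming an isotropic $(1,1)$ class of this kind keeps one inside the locus of Lagrangian fibrations, the entire deformation consists of Lagrangian fibrations over $\P^2$. Concatenating with the family $\mathcal{M}/\Delta$ then exhibits $\mathrm{Hilb}^2S_0$ and a Beauville-Mukai system as two members of a single such family, which is the claim.

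The step I expect to be the real obstacle is precisely this passage from \emph{birational} to \emph{deformation} equivalent while retaining the fibration: one must be sure that the birational modification is confined to codimension two over $\Delta_0$ and genuinely preserves the isotropic class $L_0$. Granting that, the deformation equivalence of birational holomorphic symplectic manifolds, combined with Matsushita's description of the Lagrangian locus, finishes the proof with no further computation.
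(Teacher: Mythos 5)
Your proposal follows essentially the same route as the paper: its proof likewise combines the Proposition with Huybrechts' theorem that birational holomorphic symplectic manifolds correspond to non-separated points in the moduli space, which allows $M_0$ to be replaced by $\mathrm{Hilb}^2S_0$ in the family $\mathcal{M}/\Delta$. Your extra care in matching the fibration class $L_0$ and invoking Matsushita's theorem to stay inside the locus of Lagrangian fibrations just makes explicit a point the paper's three-sentence proof leaves implicit; it is the same argument.
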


\begin{proof}
This is achieved by a birational modification of the relative moduli space $\mathcal{M}/\Delta$. We showed that the fibre $M_0$ above $t=0$ is birational to $\mathrm{Hilb}^2S_0$. Birational holomorphic symplectic manifolds correspond to non-separated points in the moduli space (see Huybrechts~\cite{huybrechts97}), so we can replace $M_0$ by $\mathrm{Hilb}^2S_0$ in a family.
\end{proof}

Assume that $S$ is very general, so that it contains exactly $24$ nodal rational curves as singular fibres. The Lagrangian fibration on $\mathrm{Hilb}^2S_0$ has a discriminant locus consisting of $24$ lines and a conic, with the lines tangent to a conic; the singular fibres above the lines are semistable, whereas those above the conic are not. A Beauville-Mukai integrable system will have a discriminant locus of degree $30$ (see~\cite{sawon08i}), and general singular fibres will be semistable. Thus the corollary demonstrates how non-semistable fibres can deform to semistable fibres under a small deformation of the Lagrangian fibration. In addition, it suggests that the conic should be counted with multiplicity three, so that the degree of the discriminant locus is preserved under deformations.

We now extend this example to higher dimensions.

\begin{lemma}
For $n\geq 3$ there is a natural isomorphism
$$\H^0(nF+D)\cong\H^0(nF)\cong\C^{n+1}.$$
Thus $D$ is the base locus of the linear system $|nF+D|$, the movable part consists of sets of $n$ fibres, and 
$$|nF|=\mathrm{Sym}^n\P^1\cong\P^n.$$
\end{lemma}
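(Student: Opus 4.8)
The plan is to mirror the proof of the $n=2$ lemma, replacing the boundary computation $(2F+D).D=0$ by its generic analogue. First I would record the intersection numbers on $S$, using $F^2=0$, $F.D=1$, and $D^2=-2$ (the last because the section $D\cong\P^1$ is a smooth rational curve and $K_S=0$):
$$(nF+D).F=1,\qquad (nF+D).D=n-2,\qquad (nF+D)^2=2(n-1).$$
The Mori cone of $S$ is generated by the fibre class $F$ and the unique $(-2)$-curve $D$, so every irreducible curve class is a non-negative combination of $F$ and $D$. Since $nF+D$ meets both of these strictly positively for $n\geq 3$, and $(nF+D)^2>0$, Nakai--Moishezon shows that $nF+D$ is in fact \emph{ample} (in contrast to the merely nef and big class $2F+D$). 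Kodaira vanishing together with $K_S=0$ then gives $\H^1(S,nF+D)=0$.

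Next I would restrict to $D$. Because $(nF+D).D=n-2$ and $D\cong\P^1$, we have $\O(nF+D)|_D\cong\O_{\P^1}(n-2)$, so the restriction sequence reads
$$0\longrightarrow\O(nF)\longrightarrow\O(nF+D)\longrightarrow\O_{\P^1}(n-2)\longrightarrow 0.$$
Passing to cohomology and using $\H^1(S,nF+D)=0$ gives the exact sequence
$$0\to\H^0(S,nF)\to\H^0(S,nF+D)\xrightarrow{r}\H^0(\P^1,\O(n-2))\xrightarrow{\delta}\H^1(S,nF)\to 0.$$

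The crux is to compute $\H^1(S,nF)$ and to deduce that the restriction map $r$ vanishes. For the former I would run the Leray spectral sequence for $\pi:S\to\P^1$ exactly as in the $n=2$ case: since $nF=\pi^*\O(n)$, one has $\pi_*\O_S=\O$, $R^1\pi_*\O_S\cong\O(-2)$, hence by the projection formula $R^1\pi_*\O(nF)\cong\O(n)\otimes R^1\pi_*\O_S\cong\O(n-2)$; combined with $\H^1(\P^1,\O(n))=0$ this yields $\H^1(S,nF)\cong\H^0(\P^1,\O(n-2))\cong\C^{n-1}$. The decisive numerical coincidence is that the source $\H^0(\P^1,\O(n-2))$ and the target $\H^1(S,nF)$ of $\delta$ both have dimension $n-1$. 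As $\H^1(S,nF+D)=0$ already makes $\delta$ surjective, equality of dimensions forces $\delta$ to be an isomorphism; hence $r=0$ and the inclusion $\H^0(S,nF)\hookrightarrow\H^0(S,nF+D)$ is an isomorphism. Finally $\H^0(S,nF)=\H^0(\P^1,\O(n))\cong\C^{n+1}$, which gives both claimed isomorphisms and exhibits $D$ as the base locus of $|nF+D|$.

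I expect the only genuine obstacle to be the step where $\delta$ is shown to be an isomorphism: unlike the $n=2$ lemma, where $\delta$ is merely a nonzero endomorphism of $\C$, here one really needs the matching of dimensions $\dim\H^0(\P^1,\O(n-2))=n-1=\dim\H^1(S,nF)$, which in turn rests on the identification $R^1\pi_*\O_S\cong\O(-2)$ special to elliptic K3 surfaces. Everything else is a routine transcription of the $n=2$ argument.
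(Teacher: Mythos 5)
Your proposal is correct and follows essentially the same route as the paper: the same intersection-number computation, Kodaira vanishing, the restriction sequence to $D$, the Leray computation $\H^1(S,nF)\cong\H^0(\P^1,\O(n-2))\cong\C^{n-1}$ via $R^1\pi_*\O_S\cong\O(-2)$, and the same surjectivity-plus-dimension-count argument forcing the connecting map to be an isomorphism and hence $\H^0(S,nF)\cong\H^0(S,nF+D)$. The only difference is cosmetic: you spell out ampleness of $nF+D$ via the Mori cone and Nakai--Moishezon, where the paper simply asserts it from the positivity of the three intersection numbers.
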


\begin{proof}
Firstly
$$(nF+D).F=1,\qquad (nF+D).D=n-2>0,\qquad\mbox{and}\qquad (nF+D)^2=2n-2>0,$$
so $nF+D$ is ample. By the Kodaira vanishing theorem $\H^1(S,nF+D)=0$. The short exact sequence
$$0\longrightarrow \O(nF)\longrightarrow \O(nF+D)\longrightarrow \O(nF+D)|_D\cong\O_D(n-2)\longrightarrow 0$$
then yields the long exact sequence
$$0\rightarrow\H^0(S,nF)\rightarrow\H^0(S,nF+D)\rightarrow\H^0(D,\O_D(n-2))\cong\C^{n-1}\rightarrow\H^1(S,nF)\rightarrow 0.$$
The Leray spectral sequence for $\pi:S\rightarrow\P^1$ yields the exact sequence
$$\H^1(\P^1,\O(n))=0\longrightarrow \H^1(S,nF)=\H^1(S,\pi^*\O(n))\longrightarrow \H^0(\P^1,R^1\pi_*\pi^*\O(n))\longrightarrow 0,$$
which implies
$$\H^1(S,nF)\cong \H^0(\P^1,R^1\pi_*\pi^*\O(n))= \H^0(\P^1,\O(n)\otimes R^1\pi_*\O_S)=\H^0(\P^1,\O(n-2))\cong\C^{n-1}.$$
The long exact sequence above now looks like
$$0\longrightarrow\H^0(S,nF)\longrightarrow\H^0(S,nF+D)\longrightarrow\C^{n-1}\stackrel{\cong}{\longrightarrow}\C^{n-1}\longrightarrow 0.$$
We conclude that the first map is also an isomorphism, i.e.,
$$\H^0(S,nF+D)\cong\H^0(S,nF).$$
Thus $nF+D$ is ample but not very ample: it has base locus $D$. Finally, the Leray spectral sequence also yields
$$\H^0(S,nF)=\H^0(S,\pi^*\O(n))\cong\H^0(\P^1,\O(n))\cong\C^{n+1}.$$
\end{proof}

Let us deform $S$ in the direction $\alpha=(2-n)F+D$ by taking
$$\sigma_t:=\sigma+t\alpha-t^2\left(\frac{\alpha^2}{2\sigma\bar{\sigma}}\right)\bar{\sigma}=\sigma+t((2-n)F+D)+t^2(n-1)\bar{\sigma},$$
where we once again assume the normalization $\sigma\bar{\sigma}=1$. Denote by $\mathcal{S}\rightarrow\Delta$ the resulting deformation of $S$, over a disc $\Delta$ of suitable radius. Then $nF+D$ will remain of type $(1,1)$, because
$$\sigma_t.(nF+D)=\sigma.(nF+D)+t((2-n)F+D).(nF+D)+t^2(n-1)\bar{\sigma}.(nF+D)=0.$$
Moreover, $nF+D$ will be a generator of the N{\'e}ron-Severi group for very general $t$. The linear system $|nF+D|$ gives a relative map
$$\mathcal{S}/\Delta\longrightarrow\P^n_{\Delta}.$$
This is an embedding for very general $t$, but for $t=0$ the image of $S=S_0\rightarrow\P^n$ is a normal rational curve, because $|nF+D|$ has a base locus $D$ on $S=S_0$.

We define a relative moduli space $\mathcal{M}/\Delta$ by defining $M_t$ to be the Mukai moduli space of stable sheaves on $S_t$ with Mukai vector
$$v=(0,[nF+D],1-n).$$
A general element will look like $\iota_*L$ for a degree zero line bundle $L$ on a curve $\iota:C\hookrightarrow S_t$ in $|nF+D|$.

\begin{proposition}
The relative moduli space $\mathcal{M}/\Delta$ is a family of Lagrangian fibrations over $\P^n_{\Delta}$. Specifically
\begin{enumerate}
\item for very general $t$, $M_t$ is a Beauville-Mukai integrable system,
\item for $t=0$, $M_0$ is birational to $\mathrm{Hilb}^nS_0$ and the Lagrangian fibration is induced by the original elliptic fibration on $S_0=S$.
\end{enumerate}
\end{proposition}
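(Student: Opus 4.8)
The plan is to mirror the proof just given in the $2n=4$ case, every step of which generalises. Part 1 requires no argument: for very general $t$ the N{\'e}ron--Severi group of $S_t$ is generated by $nF+D$, which is therefore ample, every curve in $|nF+D|$ is reduced and irreducible, and $M_t$ is a Beauville--Mukai integrable system by construction.

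For part 2 I would first invoke the preceding lemma: at $t=0$ the linear system $|nF+D|$ has base locus $D$, and its movable part cuts out unordered $n$-tuples of fibres. A general curve therefore has the shape $C=F_1+\cdots+F_n+D$, where $F_1,\ldots,F_n$ are distinct smooth elliptic fibres and $D\cong\P^1$ is the section, meeting each $F_i$ in a single point $p_i=F_i\cap D$. A general element of the fibre of $M_0$ over $[C]$ is $\iota_*L$ for a degree-zero line bundle $L$ on $C$; writing $L_0,L_1,\ldots,L_n$ for its restrictions to $D,F_1,\ldots,F_n$ and $d_0,\ldots,d_n$ for their degrees, one has $d_0+\cdots+d_n=0$ together with gluing isomorphisms at the $p_i$.

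The heart of the argument is the stability analysis, exactly as in the $n=2$ case. I would show that stability of $\iota_*L$ with respect to the chosen general polarisation forces $d_0=d_1=\cdots=d_n=0$: a component carrying positive degree produces a destabilising subsheaf, so the general polarisation pins the multidegree to its unique balanced value, which is $(0,\ldots,0)$. Granting this, $L_0=\O_D$ is forced (the unique degree-zero line bundle on $\P^1$; indeed any line bundle on $\P^1$ is rigid), while each $L_i$ ranges over $\mathrm{Pic}^0F_i\cong F_i$. Since the dual graph of $C$ is a tree---a central vertex $D$ joined to the $n$ disjoint legs $F_i$---its first Betti number vanishes, so there is no $\C^*$-worth of gluing moduli and the generalised Jacobian of $C$ is just $\prod_i\mathrm{Pic}^0F_i$. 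Hence the fibre of $M_0\rightarrow|nF+D|$ over $[C]$ is $F_1\times\cdots\times F_n$.

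Finally I would compare with the Hilbert scheme. The Lagrangian fibration $\mathrm{Hilb}^nS_0\rightarrow\mathrm{Sym}^n\P^1\cong\P^n$ induced by $S_0\rightarrow\P^1$ has fibre $F_1\times\cdots\times F_n$ over a general point $\{x_1,\ldots,x_n\}$ of distinct fibres. As both Lagrangian fibrations admit global sections and have isomorphic fibres over a dense open subset of the base, they are birational, which is part 2. The birational (rather than biregular) conclusion is all one can expect, since the two constructions differ over the big diagonal and over the singular fibres of $S_0\rightarrow\P^1$. The one delicate point is the stability computation that pins the multidegree to zero, together with the remark that the tree-shaped dual graph removes the gluing moduli; everything else is formal.
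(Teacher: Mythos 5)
Your proposal is correct and follows essentially the same route as the paper's own proof: identify the general curve at $t=0$ as $F_1+\cdots+F_n+D$, use stability of $\iota_*L$ to force every component degree to vanish, conclude the fibre is $F_1\times\cdots\times F_n$, and deduce birationality to $\mathrm{Hilb}^nS_0$ from the agreement of general fibres together with the existence of global sections. Your two elaborations---the sketch of why stability pins the multidegree to zero and the observation that the tree-shaped dual graph kills the $\C^*$ gluing moduli---are details the paper leaves implicit, not a different argument.
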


\begin{proof}
For very general $t$, $M_t$ is a Beauville-Mukai system by definition; so there is nothing to prove in part 1. When $t=0$, the linear system $|nF+D|$ has $D$ as a base locus. Thus every curve $C$ in the linear system $|nF+D|$ looks like $F_1+\ldots +F_n+D$, where $F_i$ are fibres of the elliptic fibration $S\rightarrow\P^1$. Assume that the $F_i$ are smooth and distinct. A degree zero line bundle $L$ on $F_1+\ldots +F_n+D$ is given by line bundles $L_1,\ldots, L_{n+1}$ on $F_1,\ldots,F_n$, and $D$, respectively, plus isomorphisms $(L_i)_{p_i}\cong (L_{n+1})_{p_i}$ at $p_i=F_i\cap D$, for $1\leq i\leq n$. Stability of $\iota_*L$ implies that the $L_i$ all have degree zero. Therefore the fibre of $M_0\rightarrow |nF+D|$ over the point $F_1+\ldots +F_n+D$ is isomorphic to
$$\mathrm{Pic}^0F_1\times\cdots\times\mathrm{Pic}^0F_n\cong F_1\times\cdots\times F_n.$$

Similarly, the elliptic fibration $S_0\rightarrow\P^1$ induces a Lagrangian fibration
$$\mathrm{Hilb}^nS_0\longrightarrow\mathrm{Sym}^n\P^1\cong\P^n$$
whose fibre over a general point $F_1+\ldots +F_n\in |nF|=\mathrm{Sym}^n\P^1\cong\P^n$ is also isomorphic to $F_1\times\cdots\times F_n$. Since these Lagrangian fibrations both admit global sections, they must be birational.
\end{proof}

As before, a birational modification of the relative moduli space $\mathcal{M}/\Delta$ now yields the following.

\begin{corollary}
The Hilbert scheme $\mathrm{Hilb}^nS_0$ can be deformed to a Beauville-Mukai integrable system, as a Lagrangian fibration over $\P^n$.
\end{corollary}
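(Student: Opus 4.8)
The plan is to mimic verbatim the argument given for the corresponding corollary in the $2n=4$ case, since the preceding proposition supplies exactly the two ingredients required in arbitrary dimension: the generic fibre $M_t$ of the relative moduli space $\mathcal{M}/\Delta$ is a Beauville-Mukai integrable system, while the central fibre $M_0$ is birational to $\mathrm{Hilb}^nS_0$, with the birational map intertwining the two induced Lagrangian fibrations over $\P^n$. First I would recall Huybrechts' result (\cite{huybrechts97}) that birational irreducible holomorphic symplectic manifolds determine non-separated points in the moduli space of marked holomorphic symplectic manifolds. Applied to the pair $M_0$ and $\mathrm{Hilb}^nS_0$, this permits a birational modification of $\mathcal{M}/\Delta$ that leaves every fibre over $t\neq 0$ unchanged but replaces the central fibre $M_0$ by $\mathrm{Hilb}^nS_0$. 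The resulting family has generic fibre a Beauville-Mukai system and special fibre $\mathrm{Hilb}^nS_0$; read from $t=0$ outward, it exhibits $\mathrm{Hilb}^nS_0$ as a deformation of a Beauville-Mukai integrable system, as claimed.

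The one point deserving attention is that the deformation be realized \emph{as Lagrangian fibrations over $\P^n$}, rather than merely as holomorphic symplectic manifolds. For this I would observe that the relative Lagrangian fibration $\mathcal{M}/\Delta\rightarrow\P^n_{\Delta}$ is already defined over the entire disc, and that by the proposition the birational map $M_0\dashrightarrow\mathrm{Hilb}^nS_0$ is an isomorphism over a dense open subset of the common base $\P^n$ and carries one fibration to the other there. Hence the modification can be performed compatibly with the projection to $\P^n_{\Delta}$, so that the new family remains a family of Lagrangian fibrations over $\P^n_{\Delta}$.

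I expect the main obstacle to be verifying that Huybrechts' non-separatedness statement genuinely upgrades to a family of \emph{Lagrangian fibrations}: one must ensure that the projection to $\P^n$ and its base extend across the birational modification rather than being destroyed by it. This is where the fact (used in the proposition) that both $M_0\rightarrow\P^n$ and $\mathrm{Hilb}^nS_0\rightarrow\P^n$ admit global sections is decisive, as the sections pin down the identification of the two fibrations and prevent the birational map from twisting the fibration structure. Once this compatibility is confirmed, the corollary is immediate.
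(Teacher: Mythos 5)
Your proposal takes essentially the same route as the paper: the paper's proof (spelled out in the $2n=4$ case and invoked verbatim for $n\geq 3$ via ``as before, a birational modification of the relative moduli space $\mathcal{M}/\Delta$ now yields the following'') consists exactly of citing the preceding proposition and then using Huybrechts' non-separatedness result to replace the central fibre $M_0$ by $\mathrm{Hilb}^nS_0$ in the family. Your additional care about the birational modification respecting the projection to $\P^n_{\Delta}$ is a point the paper leaves implicit, but it is consistent with, and a reasonable elaboration of, the paper's argument.
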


A Beauville-Mukai integrable system over $\P^n$ will have a discriminant locus of degree $6(n+3)$ (see~\cite{sawon08i}), and general singular fibres will be semistable. The discriminant locus of the Lagrangian fibration on $\mathrm{Hilb}^nS_0$ consists of $24$ hyperplanes and a degree $2n-2$ hypersurface $R$. The latter is the image of the `big' diagonal under the projection
$$\P^1\times\cdots\times\P^1\longrightarrow\mathrm{Sym}^n\P^1\cong\P^n,$$
and its degree can be calculated by identifying $\P^n$ with the space of homogeneous polynomials $f$ of degree $n$; $R$ is then given by the discriminant of $f$, i.e., the resultant of $f$ and $f^{\prime}$, which has degree $2n-2$. The singular fibres above the hyperplanes are semistable, whereas those over $R$ are not. Indeed, the behaviour is similar to the $n=2$ case: over a general point of $R$, the singular fibre will look like the product of $n-2$ elliptic curves and a singular fibre over the conic in the $n=2$ case. Thus we once again see how non-semistable fibres can deform to semistable fibres under a small deformation of the Lagrangian fibration. Moreover, we expect that $R$ should again be counted with multiplicity three, so that the total degree of the discriminant locus,
$$24+3(2n-2)=6(n+3),$$
is preserved under the deformation.

\begin{flushleft}
Department of Mathematics\hfill sawon@email.unc.edu\\
University of North Carolina\hfill sawon.web.unc.edu\\
Chapel Hill NC 27599-3250\\
USA\\
\end{flushleft}

\end{document}